\providecommand{\U}[1]{\protect\rule{.1in}{.1in}}
\newtheorem{theorem}{Theorem}
\newtheorem{corollary}[theorem]{Corollary}
\newtheorem{definition}[theorem]{Definition}
\newtheorem{proposition}[theorem]{Proposition}
\newenvironment{proof}[1][Proof]{\noindent\textbf{#1.} }{\ \rule{0.5em}{0.5em}}
\begin{document}

\title{Link colorings and the Goeritz matrix}
\author{Lorenzo Traldi\\Lafayette College\\Easton, Pennsylvania 18042}
\date{}
\maketitle

\begin{abstract}
We discuss the connection between colorings of a link diagram and the Goeritz matrix.

Keywords. Coloring, Goeritz matrix, link

Mathematics Subject Classification. 57M25, 05C10, 05C22, 05C50

\end{abstract}

\section{Introduction}

This paper is inspired by two papers that have appeared previously, the first written by Nanyes \cite{N} and the second written by Lamey, Silver and Williams \cite{LSW}. These papers involve the connection between two ideas from classical knot theory, the Goeritz matrix and colorings of link diagrams. Goeritz introduced his matrix in 1933 \cite{G}, and it was also discussed in Reidemeister's classic treatise \cite{R}. The Goeritz matrix has attracted the attention of many researchers over the decades; see \cite{CG, C, GL, H, ILL, J, K, Li, LW, Lip, P, S, T} for instance. Link colorings were mentioned in the textbook of Crowell and Fox \cite[Exercises VI. 6 and VI. 7]{CF}. Link colorings can be defined easily and they provide very simple nontriviality proofs for some knots and links, so it is natural that they are mentioned in many introductory discussions of knot theory, like \cite{A, CSW, Kau, L}.

Crowell and Fox used link colorings for the purpose of providing
combinatorial descriptions of certain kinds of representations of link groups (the fundamental groups of link complements in $\mathbb{S}^{3}$). At
first glance, this purpose does not suggest a connection with the Goeritz
matrix; link groups are nonabelian in general, and we would expect a matrix of
integers to be associated with abelian groups instead. Nanyes \cite{N}
provided an indirect connection: link colorings with values in an abelian
group $A$ are connected with representations of link groups in a semidirect
product of $A$ and $\mathbb{Z}/2\mathbb{Z}$, and these representations in turn
are connected with the Goeritz matrix. Nanyes's discussion is quite general; it applies to any link diagram, and any abelian group. His presentation requires the theory connecting group representations to covering spaces, and the fact that the Goeritz matrix is associated with 2-fold coverings of $\mathbb{S}^{3}$ branched over links \cite{K, Li, S}. 

Extending a theme established earlier by Kauffman \cite{Krem} and Carter,
Silver and Williams \cite{CSW}, Lamey, Silver and Williams \cite{LSW} showed
that the colorings introduced by Crowell and Fox are related to other types of link colorings, and they
observed that one of these other types of link colorings is directly related to the Goeritz matrix. Unlike Nanyes, they restricted their attention to colorings with values in a field, and to alternating link diagrams for which one of the associated checkerboard graphs is connected. This portion of their paper does not require results from algebraic topology; the arguments involve only relatively elementary properties of matrices and plane graphs.

Taken together, the papers of Lamey, Silver and Williams \cite{LSW} and Nanyes \cite{N} suggest a problem of exposition: to provide an explanation of the connection between link colorings and the Goeritz matrix that is as direct and elementary as the discussion in \cite{LSW}, and as general as the discussion in \cite{N}. The purpose of the present paper is to provide such an explanation. 

Before starting, we should thank an anonymous reviewer whose good advice improved the exposition in several regards.

\section{Link colorings}

We use link diagrams to represent links in the usual way. A tame, classical
link diagram\ begins with a finite number of piecewise smooth, simple closed
curves $\gamma_{1},\dots,\gamma_{\mu}$ in the plane. The only
(self-)intersections of these curves are transverse double points, called
\emph{crossings}; and there are only finitely many of these. At each crossing
a very short segment of one of the incident curves is removed. The resulting
piecewise smooth 1-dimensional subset of $\mathbb{R}^{2}$ is a link diagram.
The set of arc components of a link diagram $D$ is denoted $A(D)$. The
\emph{faces} of $D$ are the arc components of $\mathbb{R}^{2}-\cup\gamma_{i}$,
and the set of faces of $D$ is denoted $F(D)$. A link diagram $D$ represents a
link $L(D)$ in $\mathbb{R}^{3}$, which consists of piecewise smooth, simple
closed curves $K_{1},\dots,K_{\mu}$ such that $K_{i}$ projects to $\gamma_{i}$
for each $i$. $K_{1},\dots,K_{\mu}$ are the \emph{components} of $L(D)$. The
removal of segments in $D$ is used to distinguish the underpassing arc at each crossing.

A link diagram is \emph{split} if the union $\cup \gamma_i$ is not connected. In keeping with the goal of generality mentioned in the introduction, our discussion includes split diagrams as well as non-split diagrams.

We use $A$ to denote an arbitrary abelian group. To avoid notational confusion with the
arcs of a link diagram, we usually use $\alpha$ to represent an element of $A$.

\begin{definition}
\label{foxc}If $D$ is a link diagram then a \emph{Fox coloring} of $D$ with
values in an abelian group $A$ is a mapping $f:A(D)\rightarrow A$ with the following property.

\begin{itemize}
\item If there is a crossing of $D$ at which the underpassing arcs are $a_{1}$
and $a_{2}$ and the overpassing arc is $a_{3}$, then $f(a_{1})+f(a_{2})=2\cdot
f(a_{3})$.
\end{itemize}
The set of Fox colorings of $D$ with values in $A$ is denoted $\mathcal{F}_{A}(D)$.
\end{definition}

Notice that we do not require a Fox coloring of $D$ to be $0$ on any arc of $D$. This generality gives $\mathcal{F}_{A}(D)$ a couple of pleasant (and obvious) naturality properties, which do not hold in \cite{CSW, LSW, N}. Suppose $D$ is the split union of subdiagrams $D_1$ and $D_2$, i.e., $D=D_1 \cup D_2$ and no crossing of $D$ involves both $D_1$ and $D_2$. Then the union of functions defines a bijective map
\[
\mathcal{F}_{A}(D_1) \times \mathcal{F}_{A}(D_2) \to \mathcal{F}_{A}(D)
\]
for every abelian group $A$, and an injective map
\[
\mathcal{F}_{A_{1}}(D_1) \times \mathcal{F}_{A_{2}}(D_2) \to \mathcal{F}_{A_{1} \oplus A_{2}}(D)
\]
for every pair of abelian groups $A_1,A_2$.

\begin{definition}
\label{dehnc}Let $D$ be a link diagram. Then a \emph{Dehn
coloring} of $D$ with values in an abelian group $A$ is a mapping
$d:F(D)\rightarrow A$ with the following property.

\begin{itemize}
\item Suppose $F$ and $F^{\prime}$ are two faces of $D$ whose boundaries share
a segment of positive length, contained in an arc $a\in A(D)$. Then the sum
$d(F)+d(F^{\prime})$ depends only on $a$.
\end{itemize}
The set of Dehn colorings of $D$ with values in $A$ is denoted $\mathcal{D}_{A}(D)$.
\end{definition}

Definition~\ref{dehnc} gives rise to one equation for each crossing of $D$. If the faces incident at a crossing are indexed as in Figure \ref{colorf1} then the boundaries of $F_1$ and $F_4$ share a segment of positive length contained in $a_3$, and so do the boundaries of $F_2$ and $F_3$. Consequently Definition~\ref{dehnc} requires $d(F_{1})+d(F_{4})=d(F_{2})+d(F_{3})$.

\begin{figure}[ht]%
\centering
\includegraphics[
trim=2.139494in 7.633484in 1.605895in 1.344884in,
height=1.548in,
width=3.5933in
]%
{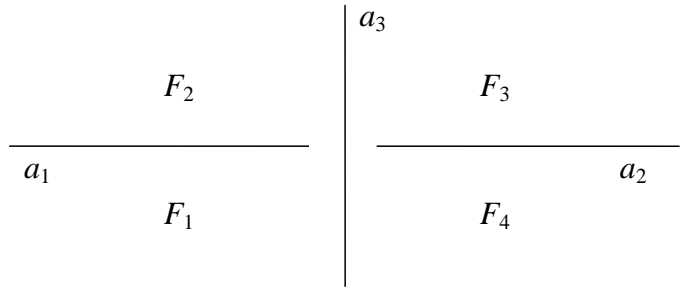}%
\caption{The arcs and faces incident at a crossing.}%
\label{colorf1}%
\end{figure}

We should remark that the term \textquotedblleft Dehn
coloring\textquotedblright\ indicates a courteous regard for one of the important early contributors to combinatorial group theory and geometric topology, but it does not indicate that these colorings were actually introduced by Dehn. Definition \ref{dehnc} was mentioned by Kauffman \cite{Krem} and developed further by Carter,\ Silver and Williams \cite{CSW}, who chose the name \textquotedblleft Dehn
coloring\textquotedblright\ because these colorings are connected with a way to present link groups that was introduced by Dehn. We should also remark that like Definition \ref{foxc}, Definition \ref{dehnc} is generalized from \cite{CSW, Krem} -- we allow $A$ to be an arbitrary abelian group, and
we do not require any value of a Dehn coloring to be $0$. 

As discussed in \cite{CSW} and \cite{Krem}, Dehn colorings and Fox colorings are closely related to each other.

\begin{definition}
\label{fdef}Let $D$ be a link diagram, and $A$ an abelian group. Then $\mathcal{D}_{A}(D)$ and $\mathcal{F}_{A}(D)$ are both abelian groups under pointwise addition of functions. There is a homomorphism $\varphi:\mathcal{D}_{A}%
(D)\rightarrow\mathcal{F}_{A}(D)$, defined by: if $d\in\mathcal{D}%
_{A}(D)$ then
\[
\varphi(d)(a)=d(F_{1})+d(F_{2})
\]
whenever $F_{1},F_{2}$ are two faces of $F$ whose boundaries share a segment
of positive length on $a$.
\end{definition}

Definition \ref{dehnc} implies that $\varphi(d)$ is well defined, and also
that $\varphi(d)$ satisfies Definition \ref{foxc}: with arcs and faces indexed
as in Figure \ref{colorf1},
\begin{align*}
\varphi(d)(a_{1})+\varphi(d)(a_{2})  &  =d(F_{1})+d(F_{2})+d(F_{3})+d(F_{4})\\
&  =(d(F_{1})+d(F_{4}))+(d(F_{2})+d(F_{3}))=2\cdot\varphi(d)(a_{3}).
\end{align*}

As is well known, the faces of a link diagram can be colored in a checkerboard fashion, so that whenever the boundaries of two faces share a segment of positive length, one face is shaded and the other is not shaded. It is traditional to prefer one of the two possible checkerboard shadings, by specifying whether the unbounded face should be shaded or unshaded. In keeping with our theme of generality, however, we do not follow this tradition. The gain in generality is vacuous at this point, but later it will mean that the two different shadings of a link diagram give rise to two different Goeritz matrices. The theory we develop will apply equally well to both matrices.

Arbitrarily choose one of the two checkerboard shadings of a link diagram $D$, and let $\sigma:F(D)\rightarrow\{0,1\}$ be the map defined as follows.
\[
\sigma(F)=%
\begin{cases}
0\text{,} & \text{if }F\text{ is unshaded }\\
1\text{,} & \text{if }F\text{ is shaded }%
\end{cases}
\]
If $\alpha,\beta\in A$, then $D$ has a Dehn coloring
$d_{\alpha,\beta}$ given by%

\[
d_{\alpha,\beta}(F)=(1-\sigma(F))\cdot\alpha+\sigma(F)\cdot\beta\text{.}%
\]
This mapping satisfies Definition \ref{dehnc} because in Figure \ref{colorf1}
each of the pairs $\{F_{1},F_{4}\}$, $\{F_{2},F_{3}\}$ includes one shaded
face and one unshaded face, so that
\[
d_{\alpha,\beta}(F_{1})+d_{\alpha,\beta}(F_{4})=\alpha+\beta=d_{\alpha,\beta
}(F_{2})+d_{\alpha,\beta}(F_{3}).
\]

The next result is our version of \cite[Theorem 2.2]{CSW}. We have an epimorphism rather than an isomorphism because our Dehn colorings are not
required to be $0$ anywhere. The proof is essentially the
same as in \cite{CSW}, but we provide details for the reader's convenience.

\begin{theorem}
\label{fsur}The homomorphism $\varphi:\mathcal{D}_{A}(D)\rightarrow
\mathcal{F}_{A}(D)$ is surjective, and $\ker\varphi$ $=\{d_{\alpha,-\alpha}%
\mid\alpha\in A\}$.
\end{theorem}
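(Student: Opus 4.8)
I would prove the two assertions separately, starting with the kernel, which is the easier part. The inclusion $\{d_{\alpha,-\alpha}\mid\alpha\in A\}\subseteq\ker\varphi$ is a one-line computation: if faces $F,F'$ share a segment of positive length on an arc $a$ then exactly one of them is shaded, so $d_{\alpha,-\alpha}(F)+d_{\alpha,-\alpha}(F')=\alpha+(-\alpha)=0$, whence $\varphi(d_{\alpha,-\alpha})=0$. For the reverse inclusion I would use the \emph{face-adjacency graph} of $D$: its vertices are the faces of $D$, with an edge for each shared segment of positive length. This graph is connected, even for split $D$ -- any two faces are joined by a path in $\mathbb{R}^{2}$ meeting $\cup\gamma_{i}$ transversally and away from the crossings, and each transversal intersection point lies in the interior of some arc and exhibits an adjacency of the two faces it locally separates. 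Now suppose $d\in\ker\varphi$, so $d(F)+d(F')=0$ along every shared segment. Fixing a base face $F_{0}$, walking to an arbitrary face $F$ along such a path, and using $d(F_{i+1})=-d(F_{i})$ at each of the $n$ intersection points, we get $d(F)=(-1)^{n}d(F_{0})$; and since consecutive faces along the path have opposite shading, $n\equiv\sigma(F)-\sigma(F_{0})\pmod 2$. Setting $\alpha:=(-1)^{\sigma(F_{0})}d(F_{0})$ then yields $d(F)=(-1)^{\sigma(F)}\alpha=d_{\alpha,-\alpha}(F)$ for every $F$. No independence-of-path argument is needed, since $d$ is a fixed function.

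For surjectivity, let $f\in\mathcal{F}_{A}(D)$, and build a preimage by propagation along a spanning tree $T$ of the face-adjacency graph: choose a root $F_{0}$, set $d(F_{0}):=0$, and along each tree edge joining an already-colored face $F$ to a new face $F'$ across an arc $a$, set $d(F'):=f(a)-d(F)$. This determines $d$ on all of $F(D)$, and by construction $d(F)+d(F')=f(a)$ for every tree edge, so once $d$ is known to be a Dehn coloring we get $\varphi(d)=f$ for free. The work is to verify the Dehn condition across the non-tree shared segments as well. For a non-tree adjacency of $F,F'$ on an arc $a$, let $c_{1},\dots,c_{r}$ be the arcs crossed by the tree path from $F$ to $F'$; since adjacent faces have opposite shading, $r$ is odd, and propagation gives $d(F')=-d(F)+\sum_{j=1}^{r}(-1)^{j+1}f(c_{j})$, so $d(F)+d(F')-f(a)$ equals, up to sign, the alternating sum of $f$-values around the closed loop formed by the tree path together with the non-tree edge. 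Thus surjectivity reduces to the claim that the alternating sum of $f$-values around any closed loop in the face-adjacency graph vanishes.

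That claim is the heart of the matter and is exactly where Definition \ref{foxc} is used. Around a small loop encircling a single crossing of $D$, the loop crosses the overpassing arc $a_{3}$ twice and each underpassing arc $a_{1},a_{2}$ once, in the cyclic order $a_{1},a_{3},a_{2},a_{3}$ (notation as in Figure \ref{colorf1}), so its alternating $f$-sum is $f(a_{1})-f(a_{3})+f(a_{2})-f(a_{3})=f(a_{1})+f(a_{2})-2f(a_{3})=0$. Every closed loop in the face-adjacency graph can be written as a concatenation of such small loops and of trivial out-and-back loops (which contribute $f(a)-f(a)=0$), because $\mathbb{R}^{2}$ with the finitely many crossing points removed has first homology freely generated by the small loops around those points. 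I expect the main obstacle to be carrying out this last reduction in a genuinely elementary way -- without silently importing the algebraic topology the paper is at pains to avoid; the natural remedy is to replace the homological statement by an induction on the number of crossings of $D$ enclosed by the loop, peeling off one crossing at a time, in the spirit of \cite{CSW}. With the alternating-sum claim in hand, the verification above shows $\varphi$ is surjective, completing the proof.
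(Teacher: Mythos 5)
Your proposal is correct, and it rests on the same underlying mechanism as the paper's proof, but it is organized along a genuinely different technical route. The paper defines $d(F)$ directly by the alternating-sum formula (\ref{defphi}) along an arbitrary smooth path from a base face to $F$, and establishes path-independence by smoothly deforming one path into another: the only nontrivial elementary deformation pushes the path across a crossing, replacing one arc-passage by three, and the Fox relation $f(a_{1})+f(a_{2})=2\cdot f(a_{3})$ shows the value is unchanged. You instead propagate along a spanning tree of the face-adjacency graph and verify the Dehn condition on the non-tree adjacencies, reducing everything to the vanishing of the alternating $f$-sum around closed loops; your computation for the small loop crossing $a_{1},a_{3},a_{2},a_{3}$ around a single crossing is exactly the paper's three-for-one computation in different clothing. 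The step you flag as the main obstacle --- decomposing an arbitrary loop into small loops around crossings --- is precisely the step the paper also leaves at the level of a plausibility argument (``$P$ can be smoothly deformed into $P^{\prime}$,'' with generic deformations meeting crossings one at a time), so you are not missing anything the paper supplies, and your proposed induction on the number of enclosed crossings is a sensible elementary substitute. What your version buys is a combinatorial packaging (spanning tree plus cycle check) that avoids talking about smooth homotopies of paths, and an explicit argument for $\ker\varphi\subseteq\{d_{\alpha,-\alpha}\}$ via the parity of the shading along a connecting path, where the paper simply declares the description of the kernel obvious; what the paper's version buys is a single formula valid for every path, which it then reuses verbatim to produce the splitting in Theorem \ref{split} and the restriction argument in Theorem \ref{nanyes}.
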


\begin{proof}
Suppose $f\in\mathcal{F}_{A}(D)$, and $F_{0}$ is some face of $D$. Choose an
arbitrary element $\alpha_{0}\in A$, and define $d(F_{0})=\alpha_{0}$. For any
other face $F$ of $D$, choose a smooth path $P$ from a point in $F_{0}$ to a
point in $F$. We may presume that $P$ does not come close enough to any
crossing to intersect any of the short segments removed to indicate
undercrossings, and we may also presume that there are only finitely many
intersections between $P$ and $D$. Suppose that as we follow $P$ from $F_{0}$
to $F$, we encounter faces and arcs in the order $F_{0},a_{1}^{\prime}%
,F_{1}^{\prime},\dots,a_{k}^{\prime},F_{k}^{\prime}=F$. Then we define%
\begin{equation}
\label{defphi}
d(F)=(-1)^{k}\alpha_{0}+\sum_{i=1}^{k}(-1)^{k-i}f(a_{i}^{\prime}).
\end{equation}

It turns out that $d(F)$ is independent of the path $P$. To see why, suppose
$P^{\prime}$ is some other smooth path from $F_{0}$ to $F$. Then $P$ can be
smoothly deformed into $P^{\prime}$. When a smooth deformation does not
involve any crossing of $D$, there is no effect on $d(F)$. When the
deformation passes through a crossing, the effect is to replace one passage of
an arc of $D$ with three passages. For instance, in Figure \ref{colorf1} we
might replace a passage across $a_{1}$ from $F_{1}$ to $F_{2}$ with a sequence
of three passages; the first from $F_{1}$ to $F_{4}$ across $a_{3}$, the
second from $F_{4}$ to $F_{3}$ across $a_{2}$, and the third from $F_{3}$ to
$F_{2}$ across $a_{3}$. Suppose the original passage from $F_{1}$ to $F_{2}$
was indexed with $F_{1}=F_{j}^{\prime}$, $a_{1}=a_{j+1}^{\prime}$ and
$F_{2}=F_{j+1}^{\prime}$. The value given by (\ref{defphi}) is not changed
because Definition \ref{foxc} tells us that
\begin{align*}
&  (-1)^{k-j}f(a_{3})+(-1)^{k-j-1}f(a_{2})+(-1)^{k-j-2}f(a_{3})\\
&  =(-1)^{k-j}\cdot(-f(a_{2})+2\cdot f(a_{3}))=(-1)^{k-j}f(a_{1}).
\end{align*}
The same kind of argument holds if any other one of the passages in Figure
\ref{colorf1} is replaced with the remaining three.

By the way, \textquotedblleft replacing one passage with
three\textquotedblright\ might seem to indicate that the path is getting
longer. This is not true in general because some of the three new passages may
be canceled if $P$ includes the opposite passages through the same crossings. Also, if $i<j$ and $F_{i}^{\prime}=F_{j}^{\prime}$ then the argument above shows that the value of (\ref{defphi}) is not changed if $P$ is shortened to a path corresponding to the list $F_{0},a_{1}^{\prime},F_{1}^{\prime},\dots,a_{i}^{\prime},F_{i}^{\prime},a_{j+1}^{\prime},\dots,F_{k}^{\prime}=F$.

To verify that $d\in\mathcal{D}_{A}(D)$, suppose the $d$ values of the
four faces indicated in Figure \ref{colorf1} are defined using a path $P$ from
$F$ that enters the figure in the lower left hand corner. Then equation
(\ref{defphi}) implies that $d(F_{2})=f(a_{1})-d(F_{1})$, $d(F_{3})=f(a_{3}%
)-d(F_{2})$ and $d(F_{4})=f(a_{3})-d(F_{1})$. It follows that
\[
d(F_{1})+d(F_{4})=f(a_{3})=d(F_{2})+d(F_{3}).
\]

As $\varphi(d)=f$, we have verified that $\varphi$ is surjective. The description of $\ker\varphi$ in the statement is obvious.
\end{proof}

It is easy to see that the epimorphism $\varphi$ splits.

\begin{theorem}
\label{split}If $D$ is a link diagram and $A$ is an abelian group then $\mathcal{D}_{A}(D)$ is the internal direct sum of $\ker\varphi$ and a subgroup isomorphic to $\mathcal{F}_{A}(D)$.
\end{theorem}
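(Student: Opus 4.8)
The plan is to prove the slightly sharper statement that the epimorphism $\varphi$ of Theorem~\ref{fsur} admits a section, i.e.\ a homomorphism $s:\mathcal{F}_{A}(D)\to\mathcal{D}_{A}(D)$ with $\varphi\circ s=\mathrm{id}_{\mathcal{F}_{A}(D)}$; the internal direct sum decomposition will then follow by a routine argument. In fact such a section is already implicit in the proof of Theorem~\ref{fsur}: one need only make the particular choice $\alpha_{0}=0$ there.

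In detail, I would fix once and for all a face $F_{0}$ of $D$, and for each face $F$ fix a smooth path $P_{F}$ from $F_{0}$ to $F$ of the kind used in the proof of Theorem~\ref{fsur}. Given $f\in\mathcal{F}_{A}(D)$, define $s(f):F(D)\to A$ by applying formula~(\ref{defphi}) with $\alpha_{0}=0$ along $P=P_{F}$; that is, if $P_{F}$ meets the faces and arcs of $D$ in the order $F_{0},a_{1}^{\prime},F_{1}^{\prime},\dots,a_{k}^{\prime},F_{k}^{\prime}=F$, then
\[
s(f)(F)=\sum_{i=1}^{k}(-1)^{k-i}f(a_{i}^{\prime}).
\]
The proof of Theorem~\ref{fsur} already shows that $s(f)\in\mathcal{D}_{A}(D)$, that $s(f)$ is independent of the chosen paths, and that $\varphi(s(f))=f$. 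To see that $s$ is a homomorphism, note that once the family $\{P_{F}\}$ is fixed, the displayed expression for $s(f)(F)$ is a $\mathbb{Z}$-linear combination of values of $f$ with coefficients not depending on $f$, so $s(f+g)(F)=s(f)(F)+s(g)(F)$ for all $f,g$ and all $F$; hence $s$ is additive.

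It then remains to deduce the decomposition. Since $\varphi\circ s=\mathrm{id}_{\mathcal{F}_{A}(D)}$, the map $s$ is injective, so $H:=s(\mathcal{F}_{A}(D))$ is a subgroup of $\mathcal{D}_{A}(D)$ isomorphic to $\mathcal{F}_{A}(D)$. For any $d\in\mathcal{D}_{A}(D)$ the element $d-s(\varphi(d))$ lies in $\ker\varphi$, because $\varphi(d-s(\varphi(d)))=\varphi(d)-\varphi(d)=0$; hence $d=(d-s(\varphi(d)))+s(\varphi(d))$ exhibits $d$ as a sum of an element of $\ker\varphi$ and an element of $H$. And if $d\in\ker\varphi\cap H$, say $d=s(f)$, then $0=\varphi(d)=f$, so $d=s(0)=0$. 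Therefore $\mathcal{D}_{A}(D)=\ker\varphi\oplus H$. (Combining this with Theorem~\ref{fsur} and the obvious isomorphism $A\to\ker\varphi$, $\alpha\mapsto d_{\alpha,-\alpha}$, one also obtains $\mathcal{D}_{A}(D)\cong\mathcal{F}_{A}(D)\oplus A$, though this is not asserted in the statement.)

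I do not expect a genuine obstacle here, since the substantive work — that $s(f)$ is a well-defined Dehn coloring and that $\varphi(s(f))=f$ — was carried out in the proof of Theorem~\ref{fsur}. The only point deserving a word of care is that the additivity of $s$ is established using one fixed family $\{P_{F}\}$ of paths, so one must invoke path-independence to know that the resulting $s$, and hence the subgroup $H$, is the same regardless of those auxiliary choices; the splitting itself, of course, still depends on the choice of $F_{0}$.
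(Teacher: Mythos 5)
Your proposal is correct and follows essentially the same route as the paper: the paper also defines the section by running the construction from the proof of Theorem~\ref{fsur} with $\alpha_{0}=0$, observes that formula~(\ref{defphi}) makes it a homomorphism, and concludes from $\varphi\delta=\mathrm{id}$. You simply spell out the routine splitting argument and the path-independence caveat that the paper leaves implicit.
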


\begin{proof}
Let $F_{0}$ be a fixed but arbitrary face of $D$. Let $\delta
:\mathcal{F}_{A}(D)\rightarrow \mathcal{D}_{A}(D)$ be the function defined by the construction in the proof of Theorem 4, always using $\alpha_0=0$. Then formula (\ref{defphi}) tells us that $\delta$ is a homomorphism.

As $\varphi \delta(f)=f$ $\forall f \in \mathcal{F}_{A}(D)$, the theorem follows.
\end{proof}

\section{The Goeritz matrix}

Suppose $D$ is a link diagram, and $s$ is one of the two checkerboard colorings of its faces. Then each crossing of $D$ is assigned a \emph{Goeritz index} $\eta \in \{-1,1\}$ as indicated in Figure \ref{colorf2}. That is, $\eta=1$ if the overpassing arc appears on the right-hand sides of the unshaded face(s) incident at the crossing, and $\eta=-1$ if the overpassing arc appears on the left-hand sides of the unshaded face(s) incident at the crossing. Equivalently, if we index faces as in Figure \ref{colorf1} then $\eta=(-1)^{\sigma(F_{4})}$.

\begin{figure}[h]%
\centering
\includegraphics[
trim=2.541393in 7.625780in 3.214340in 1.610119in,
height=1.3534in,
width=2.0851in
]%
{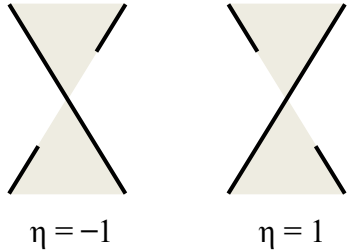}%
\caption{The Goeritz index of a crossing.}%
\label{colorf2}%
\end{figure}

\begin{definition}
\label{goeritz}Let $D$ be a link diagram, and let $s$ be either of the two checkerboard shadings of the faces of $D$. Let $F_{1},\dots,F_{n}$ be the unshaded faces of $D$. For $i,j\in\{1,\dots,n\}$ let $C_{ij}$ be the set of
crossings of $D$ incident on $F_{i}$ and $F_{j}$. Then the \emph{unreduced
Goeritz matrix} of $D$ with respect to $s$ is the $n \times n$ matrix $G(D,s)$ with entries defined as follows.
\[
G(D,s)_{ij}=%
\begin{cases}
-\sum\limits_{c\in C_{ij}}\eta(c)\text{,} & \text{if }i\neq j\\
-\sum\limits_{k\neq i}G(D,s)_{ik}\text{,} & \text{if }i=j
\end{cases}
\]
\end{definition}

Before proceeding we make five remarks about Definition \ref{goeritz}. (i) $G(D,s)$ is a symmetric integer matrix, whose rows and columns sum to $0$. It is traditional to remove one row and the corresponding column, to obtain a matrix whose determinant might not be $0$. However we do not follow this tradition here; that is why we call our matrix \textquotedblleft unreduced.\textquotedblright\ (ii) $G(D,s)$ ignores any crossing that is incident on only one unshaded face. (iii) There are two~\emph{checkerboard graphs} or~\emph{Tait graphs} associated with $D$. One graph has vertices corresponding to the shaded faces, and the other graph has vertices corresponding to the unshaded faces. Both have edges corresponding to the crossings of $D$. The matrix $G(D,s)$ is the~\emph{Laplacian matrix} of the unshaded checkerboard graph, with the Goeritz indices interpreted as edge weights. There is a well developed theory of Laplacian matrices of weighted graphs; the interested reader might consult~\cite[Chapter 13]{GR} for an introduction. (iv) If $\overline{s}$ is the other checkerboard shading of $D$ then the matrices $G(D,s)$ and $G(D,\overline{s})$ may be quite different; for instance, one may be much larger than the other. Nevertheless there is an intimate relationship between the two matrices. See \cite{LW} for a discussion. (v) Despite the connection between Goeritz matrices of link diagrams and Laplacian matrices of graphs, we have chosen to use relatively little terminology from graph theory in this paper. One reason for this choice is that the definition of the dual of a plane graph always results in a connected graph. In contrast, the checkerboard graphs of a split link diagram may be disconnected.

We use $A^{n}$ to denote the direct sum of $n$ copies of the abelian group $A$. If $F_{1},\dots,F_{n}$ are the unshaded faces of $D$ then $G(D,s)$ defines a homomorphism $A^{n} \to A^{n}$ of abelian groups. We are interested in the properties of the kernel of this homomorphism.

\begin{definition}
\label{kerdef}
Let $D$ be a link diagram with a shading $s$ whose unshaded faces are $F_{1},\dots,F_{n}$. If $A$ is
an abelian group then
\[
\ker_{A}G(D,s)=\{v\in A^{n}\mid G(D,s)\cdot v=0\}.
\]
\end{definition}

That is, $\ker_{A}G(D,s)$ is the subset of $A^n$ consisting of elements that are orthogonal to the rows of $G(D,s)$.

The next proposition is concerned with a special property of some split link diagrams: they have unshaded faces whose boundaries are not connected.

\begin{proposition}
\label{kernel}Let $D$ be a link diagram with a shading $s$ whose unshaded faces are $F_{1},\dots,F_{n}$. Let $\gamma$ be a simple closed curve, which forms part of the boundary of $F_{i}$. Let $\rho(\gamma)\in\mathbb{Z}^{n}$ be the vector defined as follows.
\[
\rho(\gamma)_{j}=%
\begin{cases}
-\sum\limits_{c\in C_{ij}\cap\gamma}\eta(c)\text{,} & \text{if }i\neq j\text{
}\\
-\sum\limits_{k\neq i}\rho(\gamma)_{k}\text{,} & \text{if }i=j\text{ }%
\end{cases}
\]
Then for any abelian group $A$, $\rho(\gamma)\cdot v=0$ $\forall v\in\ker_{A}G(D,s)$.
\end{proposition}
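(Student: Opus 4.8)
The plan is to show that $\rho(\gamma)$ is an integer linear combination of the rows of $G(D,s)$; the proposition then follows at once. Indeed, if $v\in\ker_{A}G(D,s)$ then every row $r=(r_{1},\dots,r_{n})$ of $G(D,s)$ satisfies $\sum_{j}r_{j}v_{j}=0$ in $A$, and hence so does any integer linear combination of the rows, so it suffices to express $\rho(\gamma)$ as such a combination.

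To identify the right combination, apply the Jordan curve theorem: $\gamma$ separates the plane into two connected components, and since $F_{i}$ is connected and disjoint from $\gamma$ it lies entirely in one of them. Let $U$ be the other component and put $S=\{\,k\in\{1,\dots,n\}\mid F_{k}\subseteq U\,\}$, so that $i\notin S$; note also that every face of $D$, being connected and disjoint from $\gamma$, lies entirely in $U$ or entirely in its complement. The goal is to prove
\[
\rho(\gamma)_{j}=-\sum_{k\in S}G(D,s)_{kj}\qquad\text{for all }j.
\]
This rests on two geometric facts, both obtained from a local picture near a crossing together with the fact that $\gamma$ is a simple closed curve with $U$ on one side and $F_{i}$ on the other. (a) If a crossing $c$ lies on $\gamma$, then $c$ is incident on $F_{i}$ and on exactly one further unshaded face, the one occupying the corner opposite $F_{i}$, and that face is contained in $U$. (b) If a crossing is incident on an unshaded face contained in $U$ and on an unshaded face not contained in $U$, then it lies on $\gamma$, and the latter face is $F_{i}$. (For (a) one uses that, since $\gamma\subseteq\partial F_{i}$ is a simple closed curve, near $c$ it must coincide with the boundary of the single corner occupied by $F_{i}$, so that corner is on the $F_{i}$-side of $\gamma$ and the other three are on the $U$-side. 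For (b), a crossing incident on a face contained in $U$ lies in $\overline{U}=U\cup\gamma$, but not in the open set $U$, since otherwise every face incident on it would lie in $U$; hence it lies on $\gamma$, and (a) then identifies the faces.)

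Granting (a) and (b), the displayed identity is checked coordinate by coordinate. For $j\notin S\cup\{i\}$, fact (b) shows that no crossing is incident on both $F_{j}$ and any $F_{k}$ with $k\in S$, so $\sum_{k\in S}G(D,s)_{kj}=0$; and fact (a) shows that no crossing of $C_{ij}$ lies on $\gamma$, so $\rho(\gamma)_{j}=0$. For $j\in S$, fact (b) shows that every crossing of $C_{ij}$ lies on $\gamma$, so $\rho(\gamma)_{j}=-\sum_{c\in C_{ij}\cap\gamma}\eta(c)=-\sum_{c\in C_{ij}}\eta(c)=G(D,s)_{ij}$; on the other side, substituting $G(D,s)_{jj}=-\sum_{l\neq j}G(D,s)_{jl}$ and using (b) again to see that $G(D,s)_{jl}=0$ whenever $l\notin S\cup\{i,j\}$, the terms indexed by $k\in S\setminus\{j\}$ cancel (by symmetry of $G(D,s)$) and one is left with $\sum_{k\in S}G(D,s)_{kj}=-G(D,s)_{ij}$. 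So the two sides agree in every coordinate $j\neq i$; since both the vector $\rho(\gamma)$ and the vector $\bigl(-\sum_{k\in S}G(D,s)_{kj}\bigr)_{j}$ have all coordinates summing to $0$ (the former by its definition, the latter because each row of $G(D,s)$ does), they agree in coordinate $i$ as well. This proves $\rho(\gamma)=-\sum_{k\in S}G(D,s)_{k,\bullet}$ and hence the proposition.

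The step that demands genuine care is the local plane-topology bookkeeping behind (a) and (b): one must pin down which corner of a crossing on $\gamma$ belongs to $F_{i}$ and verify that the remaining three corners lie on the $U$-side, and one must handle unshaded faces whose boundaries have several components, keeping in mind that a crossing incident on $F_{i}$ need not lie on $\gamma$ unless forced to by (b). The degenerate crossings incident on only one unshaded face cause no difficulty: they contribute nothing to the off-diagonal entries of $G(D,s)$ and nothing to $\rho(\gamma)$, and in fact (a) shows that none of them lies on $\gamma$.
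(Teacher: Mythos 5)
Your proof is correct, and it establishes the same key identity as the paper's proof does --- namely that $\rho(\gamma)$ equals minus the sum of the rows of $G(D,s)$ indexed by the unshaded faces lying in the component of $\mathbb{R}^{2}-\gamma$ not containing $F_{i}$ --- but it reaches that identity by a different route. The paper forms the auxiliary subdiagram $D^{\prime}$ consisting of $\gamma$ together with everything on the far side of it, observes that $G(D,s)$ then has a block-triangular form relative to $G(D^{\prime},s^{\prime})$, and invokes the fact that the rows of the Goeritz matrix of $D^{\prime}$ sum to zero; the geometric input (which crossings can join faces on opposite sides of $\gamma$) is left implicit in the assertion of the block structure. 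You dispense with the auxiliary diagram and verify the row identity entry by entry in $G(D,s)$ itself, isolating the geometric content as your facts (a) and (b). This buys two things: the local facts that actually justify the paper's block decomposition are stated and proved rather than assumed, and the argument is uniform --- you need neither the paper's preliminary special cases ($\gamma$ the entire boundary of $F_{i}$, or $\gamma$ meeting no other unshaded face) nor its concluding inside/outside dichotomy, since your set $S$ is defined intrinsically as the side away from $F_{i}$. The cost is the crossing-by-crossing plane-topology bookkeeping that you rightly flag as the delicate step; your justification of (a) --- that $\gamma$, being a simple closed curve contained in $\partial F_{i}$, must locally bound the single corner occupied by $F_{i}$, putting the opposite unshaded corner on the $U$-side --- is the one place where the simplicity of $\gamma$ is genuinely used, and it is handled correctly.
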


\begin{proof}
If $\gamma$ is the entire boundary of $F_{i}$ then $\rho(\gamma)$ is the
$i^{\text{th}}$ row of $G(D,s)$. If $\gamma$ is not incident on any crossing
that involves an unshaded face other than $F_{i}$, then $\rho(\gamma)=0$. In
either of these cases it is obvious that $\rho(\gamma)\cdot v=0$ $\forall
v\in\ker_{A}G(D,s)$.

Suppose $\gamma$ is not the entire boundary of $F_{i}$, and $\gamma$ is
incident on some crossing that involves an unshaded face other than $F_{i}$.
According to the Jordan curve theorem, $\mathbb{R}^{2}-\gamma$ has two
components, one inside $\gamma$ and the other outside $\gamma$. Suppose the
other unshaded face that shares a crossing on $\gamma$ with $F_{i}$ lies inside $\gamma$. (See Figure \ref{colorf6} for an example
of this sort. In the figure, $\gamma$ is indicated with dashes; it is displaced a little bit for clarity.) Let $D^{\prime}$ be the subdiagram of $D$ that includes $\gamma$ and all the arcs of $D$ contained inside $\gamma$, and let $s'$ be the shading of the faces of $D'$ defined by $s$. Then $F_{i}$ corresponds to an unshaded face of
$D^{\prime}$, whose boundary in $D^{\prime}$ is $\gamma$. The other unshaded faces of $D^{\prime}$ are the unshaded faces of $D$ contained inside
$\gamma$, and if $F_{j}$ is an unshaded face of $D$ contained inside $\gamma$
then the only difference between the row of $G(D,s)$ corresponding to $F_{j}$
and the row of $G(D^{\prime},s')$ corresponding to $F_{j}$ is that the former has
extra $0$ entries in columns corresponding to unshaded faces of $D$ not contained inside $\gamma$.

\begin{figure}[t]%
\centering
\includegraphics[
trim=1.604196in 7.097512in 1.740145in 1.074146in,
height=2.1525in,
width=3.8934in
]%
{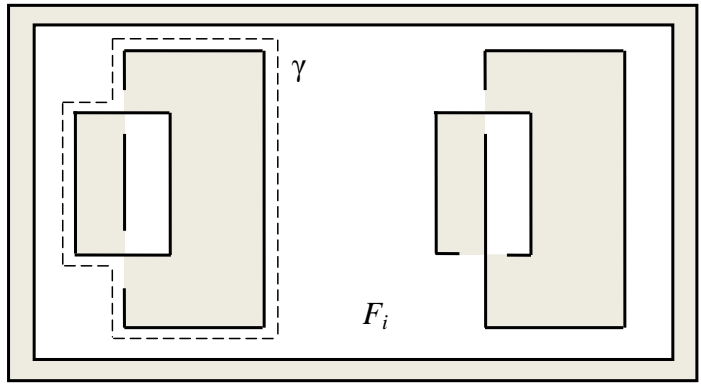}%
\caption{The boundary of $F_{i}$ consists of three closed curves.}%
\label{colorf6}%
\end{figure}

That is, if $G^{\prime}$ is the submatrix of $G(D^{\prime},s')$ obtained by
removing the row corresponding to the face of $D^{\prime}$ that contains $F_i$, then
\[
G(D,s)=%
\begin{pmatrix}
G^{\prime} & 0\\
G^{\prime\prime} & G^{\prime\prime\prime}%
\end{pmatrix}
\]
for some submatrices $G^{\prime\prime}$ and $G^{\prime\prime\prime}$.
Definition \ref{goeritz} makes it clear that the sum of the rows of a Goeritz
matrix is $0$; hence we can obtain the row of $G(D^{\prime},s')$ corresponding to
the face of $D^{\prime}$ that contains $F_i$ by summing the other rows of $G(D^{\prime
},s')$, and multiplying by $-1$. It follows that $-\rho(\gamma)$ is the sum of
the rows of $%
\begin{pmatrix}
G^{\prime} & 0
\end{pmatrix}
$, so $\rho(\gamma)$ is an element of the row space of $G(D,s)$. We conclude
that $\rho(\gamma)\cdot v=0$ for every $v\in\ker_{A}G(D,s)$.

If the other unshaded face that shares a crossing on $\gamma$ with $F_{i}$ lies outside $\gamma$ then the same argument applies, with
\textquotedblleft inside $\gamma$\textquotedblright\ changed to \textquotedblleft
outside $\gamma$\textquotedblright\ throughout.
\end{proof}

\begin{proposition}
\label{kertwo}Let $D$ be a link diagram with a shading $s$ whose unshaded faces are $F_{1},\dots,F_{n}$. Suppose $A$ is an abelian group, $v=(v_{1},\dots,v_{n})\in
\ker_{A}G(D,s)$, and $F_{i}$ and $F_{j}$ are incident at a crossing where only
one shaded face is incident. Then $v_{i}=v_{j}$.
\end{proposition}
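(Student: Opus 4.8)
The plan is to exhibit an integer linear combination of the rows of $G(D,s)$ equal to $\eta(c)(e_i-e_j)$, where $c$ is the crossing named in the statement, $\eta(c)\in\{-1,1\}$ is its Goeritz index, and $e_1,\dots,e_n$ denotes the standard basis of $\mathbb{Z}^n$. Since $v\in\ker_A G(D,s)$, the element $v$ is orthogonal to every row of $G(D,s)$, hence to every integer combination of them; so $\eta(c)(v_i-v_j)=0$ in $A$, and because multiplication by $\eta(c)=\pm1$ is an automorphism of $A$ this forces $v_i=v_j$. If $F_i=F_j$ (equivalently $i=j$) there is nothing to prove, so assume $i\neq j$. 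Going once around $c$ the incident faces alternate in shading, so $F_i$ and $F_j$ sit at one opposite pair of corners of $c$ while the other opposite pair is occupied by a single shaded face $S$ -- that the two shaded corners belong to the same face is exactly the stated hypothesis.

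Next I would separate the plane along a curve through $c$. Since $S$ is a face it is connected, so I can find a simple closed curve $\gamma$ that passes through $c$, meets $D$ only at $c$, and otherwise lies in the interior of $S$, with its two sub-arcs at $c$ running into the two shaded corners. By the Jordan curve theorem $\mathbb{R}^2\setminus\gamma=\Omega_1\sqcup\Omega_2$. Because the two shaded corners at $c$ are opposite, the two sub-arcs of $\gamma$ at $c$ together form, inside a small disk about $c$, an embedded arc through $c$ that separates the corner of $F_i$ from that of $F_j$; hence $F_i$ and $F_j$ lie in different components, say $F_i\subseteq\Omega_1$ and $F_j\subseteq\Omega_2$. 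Every face of $D$ is connected and, since $\gamma\setminus\{c\}\subseteq\operatorname{int}S$ and no face contains the crossing point $c$, disjoint from $\gamma$; so each face lies wholly in $\Omega_1$ or wholly in $\Omega_2$. Let $V_1$ and $V_2$ be the sets of unshaded faces lying in $\Omega_1$ and $\Omega_2$ respectively; then $\{V_1,V_2\}$ partitions $\{F_1,\dots,F_n\}$ with $F_i\in V_1$ and $F_j\in V_2$.

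The crux is then that $c$ is the only crossing of $D$ incident on both a face of $V_1$ and a face of $V_2$: any such crossing has points of $\Omega_1$ and of $\Omega_2$ arbitrarily near it, so it lies on the common boundary $\gamma$ of $\Omega_1$ and $\Omega_2$, and the only crossing on $\gamma$ is $c$. Consequently $C_{k\ell}=\emptyset$ whenever $F_k\in V_1$, $F_\ell\in V_2$ and $\{k,\ell\}\neq\{i,j\}$, while $C_{ij}=\{c\}$. Now take $r$ to be the sum of the rows of $G(D,s)$ indexed by the faces in $V_1$. Reading entries off Definition~\ref{goeritz}, and using that the columns of $G(D,s)$ sum to $0$, a short computation gives $r_\ell=0$ for every $\ell$ with $F_\ell\notin\{F_i,F_j\}$, together with $r_i=\eta(c)$ and $r_j=-\eta(c)$; that is, $r=\eta(c)(e_i-e_j)$. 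This is the integer combination promised above, and the proof is complete.

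I expect the only real work to be the plane-topology bookkeeping at $c$: checking that the two sub-arcs of $\gamma$ genuinely separate the corner of $F_i$ from that of $F_j$ (which is where the hypothesis enters, since it forces the two shaded corners to be an opposite pair), and checking that no crossing other than $c$ can be incident on both a $V_1$-face and a $V_2$-face. The matrix identity $r=\eta(c)(e_i-e_j)$ is routine and parallels the row-space computation in the proof of Proposition~\ref{kernel}; in fact the whole argument can be read as a version of that proposition carried out on the shaded side of the checkerboard, with $S$ and the auxiliary curve $\gamma$ in the roles played there by an unshaded face and a piece of its boundary.
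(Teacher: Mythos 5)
Your argument is correct and is essentially the paper's own proof: both use a simple closed curve through $c$ lying otherwise in the interior of the single shaded face $S$ to separate the unshaded faces into two classes, and both show that $\pm\eta(c)(e_i-e_j)$ is the sum of the rows of $G(D,s)$ indexed by the unshaded faces on one side, hence lies in the row space. The only difference is presentational: the paper packages this row sum via the smoothed diagram $D'$ and the block structure of its Goeritz matrix, whereas you compute the entries of the row sum directly.
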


\begin{proof}
Suppose $F_{i}$ and $F_{j}$ are incident at a crossing $c$, and $S$ is the
only shaded face of $D$ incident at $c$. Then there is a piecewise smooth closed curve
$\lambda$ that is contained in the interior of $S$ except for the fact that it
passes through $c$. Interchanging $i$ and $j$ if necessary, we may presume
that $F_{i}$ is contained in the region inside $\lambda$ and $F_{j}$ is
contained in the region outside $\lambda$. Let $D^{\prime}$ be the link diagram
obtained from $D$ by smoothing $c$ in such a way that $F_i$ is detached from $F_j$. (See Figure~\ref{colorf4a} for an example.) 

\begin{figure}[t]%
\centering
\includegraphics[
trim=2.408843in 7.893216in 2.409692in 1.607918in,
height=1.1563in,
width=2.7856in
]%
{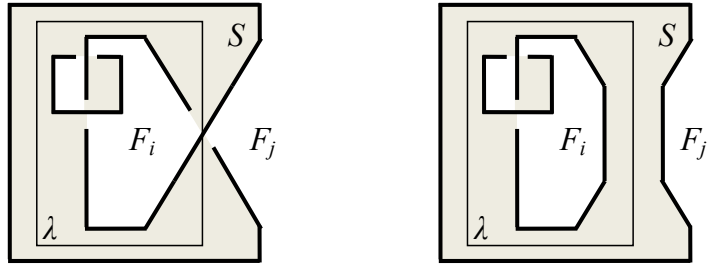}%
\caption{Examples of $D$ and $D^{\prime}$ in Proposition~\ref{kertwo}.}
\label{colorf4a}%
\end{figure}

Let $D^{\prime\prime}$ be the subdiagram of $D^{\prime}$
contained inside $\lambda$, and $D^{\prime\prime\prime}$ the subdiagram outside
$\lambda$. If $s',s''$ and $s'''$ are the shadings of $D',D''$ and $D'''$ defined by $s$ then the Goeritz matrix of $D^{\prime}$ is%
\[
G(D^{\prime},s')=%
\begin{pmatrix}
G(D^{\prime\prime},s'') & 0\\
0 & G(D^{\prime\prime\prime},s''')
\end{pmatrix}
.
\]
Let $G(D,s)_{i},G(D',s')_{i}$ and $G(D'',s'')_{i}$ be the rows of $G(D,s),G(D',s')$ and $G(D'',s'')$ corresponding to $F_{i}$, respectively. Also, let $G^{\prime\prime}$ be the submatrix of $G(D^{\prime\prime},s'')$ obtained by
removing $G(D^{\prime\prime},s'')_{i}$. Then $H=%
\begin{pmatrix}
G^{\prime\prime} & 0
\end{pmatrix}
$ is a submatrix of $G(D,s)$, and the sum of the rows of $H$ is the negative of $G(D^{\prime},s')_{i}$. Let $w=(w_{1},\dots
,w_{n})$ be the vector whose only nonzero entries are $w_{i}=\eta(c)$ and
$w_{j}=-\eta(c)$. Then $w$ is the difference between $G(D,s)_{i}$ and $G(D^{\prime},s')_{i}$. It follows that if we add $G(D,s)_{i}$ to the sum of the rows of $H$, we get $w$.

We conclude that $w$ is included in the row space of $G(D,s)$, so $w\cdot v=0$
$\forall v\in\ker_{A}G(D,s)$.
\end{proof}

\section{$\mathcal{D}_{A}(D)$ and $\ker_{A}G(D,s)$}

In this section we generalize ideas of Lamey, Silver and Williams \cite{LSW} to the Goeritz matrix. The foundation for this generalization has already been laid: Definitions~\ref{foxc} and \ref{dehnc} allow for colorings with values in arbitrary abelian groups, Definition~\ref{goeritz} allows for link diagrams with arbitrary crossing signs, and Propositions~\ref{kernel} and~\ref{kertwo} provide useful special properties of link diagrams with disconnected checkerboard graphs. Extending the arguments of~\cite{LSW} to the general setting requires only a little attention to special cases, and one additional idea given in Definition~\ref{beta}. 

\begin{proposition}
\label{kerone}Let $s$ be a shading of a link diagram $D$, whose unshaded faces are $F_{1},\dots,F_{n}$. If $d\in\mathcal{D}_{A}(D)$, then the vector
$v(d)=(d(F_{1}),\dots,d(F_{n}))$ is an element of $\ker_{A}G(D,s)$.
\end{proposition}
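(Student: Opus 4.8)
The plan is to verify that the vector $v(d)=(d(F_1),\dots,d(F_n))$ satisfies $G(D,s)\cdot v(d)=0$ by checking, for each unshaded face $F_i$, that the $i$-th coordinate of $G(D,s)\cdot v(d)$ vanishes. Writing this out, the $i$-th coordinate is $\sum_j G(D,s)_{ij}\, d(F_j)$, and using Definition~\ref{goeritz} this equals $G(D,s)_{ii}\, d(F_i) - \sum_{j\neq i}\left(\sum_{c\in C_{ij}}\eta(c)\right) d(F_j)$, which (since the diagonal entry is the negative of the sum of the off-diagonal entries in row $i$) becomes $\sum_{j\neq i}\left(\sum_{c\in C_{ij}}\eta(c)\right)\bigl(d(F_i)-d(F_j)\bigr)$. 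So the claim reduces to showing that this alternating sum over crossings incident on $F_i$ cancels.

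First I would reorganize the sum as a sum over crossings rather than over faces: it equals $\sum_c \eta(c)\bigl(d(F_i)-d(F_{j(c)})\bigr)$, where $c$ ranges over all crossings incident on $F_i$ and some other unshaded face $F_{j(c)}$, and crossings incident on $F_i$ twice contribute two such terms. Now I would use the local picture at a crossing from Figure~\ref{colorf1}: at each such crossing the two unshaded faces among $F_1,F_2,F_3,F_4$ are either $\{F_1,F_4\}$ or $\{F_2,F_3\}$ depending on the shading, and the Goeritz index is $\eta=(-1)^{\sigma(F_4)}$ as noted just before Definition~\ref{goeritz}. The key local fact is the Dehn coloring equation $d(F_1)+d(F_4)=d(F_2)+d(F_3)$ from Definition~\ref{dehnc}, but here what I actually need is the statement that the contribution $\eta(c)\bigl(d(F_i)-d(F_{j(c)})\bigr)$, as $F_i$ walks around its own boundary, telescopes.

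The cleanest way to see the cancellation: traverse the boundary of $F_i$. As one walks along an arc $a$ bounding $F_i$, the face on the other side of $a$ is some fixed face (unshaded, since the faces alternate shading across an arc), and the quantity $d(F_i)+d(F')$ is constant along $a$; equivalently $d(F_i)-d(F') $ changes only at crossings. At a crossing where $F_i$ meets another unshaded face across a corner (the configuration of Figure~\ref{colorf1}), going around the corner the partner face changes, and a short computation with the Goeritz index $\eta=(-1)^{\sigma(F_4)}$ together with the Dehn relation shows that the jump in the running partner-difference is exactly $\eta(c)\bigl(d(F_i)-d(F_{j(c)})\bigr)$ up to sign; summing the jumps around the closed boundary curve gives zero. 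If the boundary of $F_i$ has several components, Proposition~\ref{kernel} handles each component separately — indeed that proposition already shows $\rho(\gamma)$ lies in the row space of $G(D,s)$ — but for the proof of the present proposition it is enough to observe that the total is the sum of the per-component totals, each of which vanishes by the same local telescoping argument.

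The main obstacle I anticipate is bookkeeping the signs and the boundary orientation carefully: one must be sure that as the boundary curve of $F_i$ passes through a crossing incident on two distinct unshaded faces, the change in $d(F_i)-d(\text{partner})$ is correctly matched against $\eta(c)$, and that crossings incident on $F_i$ only (the ones Definition~\ref{goeritz} ignores, per remark (ii)) genuinely contribute nothing — which they do, since at such a crossing the running partner-difference is unaffected in the relevant sense. Everything else is routine: the $A$-module structure is immaterial (no division is used, only the additive relations), so the argument over an arbitrary abelian group $A$ is literally the same as over $\mathbb{Z}$.
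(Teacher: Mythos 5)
Your proposal is correct and follows essentially the same route as the paper: rewrite $G(D,s)_{i}\cdot v(d)$ as a sum of per-crossing contributions, use the Dehn relation at each crossing to identify each contribution with an increment of the $d$-value of the shaded face currently bordering $F_{i}$, and telescope around each boundary component of $F_{i}$, with crossings incident on $F_{i}$ twice contributing $0$ both to the matrix product and to the telescoping sum. One small correction: the ``partner'' face across a boundary arc of the unshaded face $F_{i}$ is \emph{shaded}, not unshaded --- which is precisely why Definition~\ref{dehnc} applies to that pair and why the running difference telescopes.
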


\begin{proof}
Suppose $1\leq i\leq n$, and let $G(D,s)_{i}$ be the $i^{\text{th}}$ row of
$G(D,s)$. The dot product $G(D,s)_{i}\cdot v(d)$ is a sum of contributions from
the crossings of $D$ incident on $F_{i}$. To analyze these contributions,
consider a crossing of $D$ incident on $F_{i}$ and $F_{j}\neq F_{i}$, pictured
in Figure \ref{colorf3}.%

\begin{figure}[htbp]
\centering
\includegraphics[
trim=2.407993in 7.627981in 2.276293in 1.609019in,
height=1.3543in,
width=2.8885in
]%
{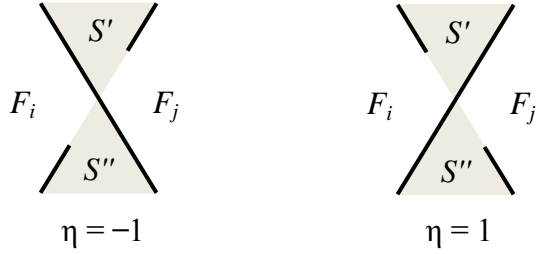}%
\caption{A crossing incident on $F_{i}$ and $F_{j}$.}%
\label{colorf3}%
\end{figure}

If $\eta=-1$, then the contribution of this crossing to $G(D,s)_{i}\cdot v(d)$
includes two terms: $-\eta\cdot d(F_{j})=d(F_{j})$ from $G(D,s)_{ij}\cdot d(F_{j})$ and
$\eta\cdot d(F_{i})=-d(F_{i})$ from $G(D,s)_{ii}\cdot d(F_{i})$. Consulting
Definition \ref{dehnc}, we see that the contribution of this crossing is%
\[
d(F_{j})-d(F_{i})=d(S^{\prime})-d(S^{\prime\prime}).
\]
If $\eta=1$, the contribution of this crossing to $G(D,s)_{i}\cdot v(d)$
includes $-\eta\cdot d(F_{j})=-d(F_{j})$ from $G(D,s)_{ij}\cdot d(F_{j})$ and
$\eta\cdot d(F_{i})=d(F_{i})$ from $G(D,s)_{ii}\cdot d(F_{i})$. Consulting
Definition \ref{dehnc}, we see that the contribution of this crossing is%
\[
-d(F_{j})+d(F_{i})=d(S^{\prime})-d(S^{\prime\prime}).
\]
Either way, the contribution is $d(S^{\prime})-d(S^{\prime\prime})$.

Now, consider a crossing as pictured in Figure \ref{colorf3}, but with $i=j$.
This crossing is ignored by $G(D,s)$, so its contribution to $G(D,s)_{i}\cdot
v(d)$ is $0$. On the other hand, Definition \ref{dehnc} tells us that
$F_{i}=F_{j}\implies d(S^{\prime})=d(S^{\prime\prime})$, so $d(S^{\prime
})-d(S^{\prime\prime})$ is $0$ too.

In every case, then, the contribution of the crossing pictured in Figure
\ref{colorf3} to $G(D,s)_{i}\cdot v(d)$ is $d(S^{\prime})-d(S^{\prime\prime})$.
If we follow the boundary component of $F_{i}$ that contains this crossing in the clockwise direction, we see that the total of the contributions of all the crossings is a telescoping sum:
\[
(d(S^{\prime})-d(S^{\prime\prime}))+(d(S^{\prime\prime})-d(S^{\prime
\prime\prime}))+\cdots+(d(S^{(k)})-d(S^{\prime}))=0\text{,}%
\]
where $k$ is the number of crossings incident on this boundary component of $F_{i}$. As every boundary component of $F_i$ contributes $0$, $G(D,s)_{i}\cdot v(d)=0$.
\end{proof}

Proposition \ref{kerone} tells us that $d\mapsto v(d)$ defines a map
$v:\mathcal{D}_{A}(F)\rightarrow\ker_{A}G(D,s)$. It is easy to see that $v$ is a
homomorphism of abelian groups, and that $\ker v$ consists of the Dehn
colorings that are identically $0$ on unshaded faces of $D$. It is more
difficult to see another important property of $v$: it is surjective.

\begin{proposition}
\label{kerthree}Let $D$ be a link diagram, with a shading $s$ whose unshaded faces are $F_{1}%
,\dots,F_{n}$. Let $A$ be an abelian group, and suppose $v\in\ker_{A}G(D,s)$. Then there is a $d\in\mathcal{D}_{A}(D)$ with $v=v(d)$.
\end{proposition}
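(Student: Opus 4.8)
The plan is to build $d$ by hand. On the unshaded faces it is forced to be $d(F_i)=v_i$, so everything rests on choosing the values of $d$ on the shaded faces of $D$. The first observation is that, once the unshaded values have been fixed, the equation of Definition~\ref{dehnc} coming from a crossing $c$ is a condition on the shaded faces incident at $c$. If $c$ is incident on two distinct shaded faces $S',S''$ and on unshaded faces $F_i,F_j$, then (rewriting $d(F_1)+d(F_4)=d(F_2)+d(F_3)$ as an equality of differences of the two same-colour pairs) the equation simply prescribes $d(S')-d(S'')=v_i-v_j$, where $S',S''$ and $F_i,F_j$ are matched up by their positions at $c$. If instead $c$ is incident on only one shaded face, then the equation reads $v_i=v_j$, which holds by Proposition~\ref{kertwo}; and if $c$ is incident on only one unshaded face then the prescribed difference is $v_i-v_i=0$ and $c$ is invisible to $G(D,s)$. (The degenerate case in which $D$ has no crossings is trivial, since then $G(D,s)=0$ and every function on $F(D)$ is a Dehn coloring.)

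Thus the task is to produce a function on the shaded faces of $D$ realizing all the prescribed differences. I would do this exactly as in the proof of Theorem~\ref{fsur}: regard the shaded faces as the vertices and the crossings as the edges of the shaded checkerboard graph $\Gamma^{*}$, pick one shaded face $S_{0}$ in each connected component of $\Gamma^{*}$ and set $d(S_{0})=0$, and for any other shaded face $S$ define $d(S)$ to be the sum of the prescribed differences along a walk in $\Gamma^{*}$ from the appropriate $S_{0}$ to $S$. If this is well defined, then setting $d(F_i)=v_i$ on the unshaded faces gives a map $d\colon F(D)\to A$; it is a Dehn coloring because each crossing's equation was built in as a prescribed difference (or is handled by Proposition~\ref{kertwo}), and $v(d)=v$ holds by construction.

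The heart of the proof is therefore the well-definedness: one must show that the prescribed differences telescope to $0$ around every closed walk in $\Gamma^{*}$. It is enough to check this on a generating set of the cycle space of $\Gamma^{*}$; drawing $\Gamma^{*}$ in the plane in the usual way (one vertex near the centre of each shaded face, one edge through each crossing), the cycle space is generated by the connected boundary curves of the complementary regions. Each such curve runs alongside one boundary curve $\gamma$ of one unshaded face of $D$, and a telescoping computation dual to the one in the proof of Proposition~\ref{kerone} identifies the sum of the prescribed differences around it with $\pm\,\rho(\gamma)\cdot v$. When $\gamma$ is the whole boundary of that unshaded face this is $\pm$ a row of $G(D,s)$ applied to $v$, which vanishes by hypothesis; in general it vanishes by Proposition~\ref{kernel}. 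Crossings incident on a single unshaded face contribute nothing to any such sum, since their prescribed difference is $0$.

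The step I expect to be the main obstacle is making the planar reduction precise for split diagrams: when $D$ is split, $\Gamma^{*}$ need not be connected, and a complementary region of $\Gamma^{*}$ may contain entire subdiagrams of $D$, so one must take care to match each boundary curve of that region with the correct boundary curve of an unshaded face of $D$ and to invoke Proposition~\ref{kernel} (rather than merely a row of the Goeritz matrix) at exactly the right places. The accompanying sign bookkeeping --- reconciling a prescribed difference at a crossing with the Goeritz index $\eta$, just as in the proof of Proposition~\ref{kerone} --- is routine but must be done with care.
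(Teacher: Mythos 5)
Your proposal is correct and follows essentially the same route as the paper: fix $d(F_i)=v_i$, propagate values across shaded faces along walks in the shaded checkerboard graph, and reduce well-definedness to the vanishing of $\rho(\gamma)\cdot v$ on boundary curves $\gamma$ of unshaded faces via Propositions~\ref{kernel} and~\ref{kertwo}. The only difference is presentational --- you phrase the path-independence check in terms of a generating set for the cycle space of the plane graph, while the paper deforms one path into another through the moves of Figure~\ref{colorf5} --- and the split-diagram subtlety you flag is exactly what Proposition~\ref{kernel} is designed to handle.
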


\begin{proof}
For each unshaded face $F_{i}$ of $D$, define $d(F_{i})$ to be the
$i^{\text{th}}$ coordinate of the vector $v$. Our job is to define $d$ values
for the shaded faces, in such a way that the resulting function
$d:F(D)\rightarrow A$ satisfies Definition \ref{dehnc}.

Choose any shaded face $S$ of $D$, choose any element $\alpha\in A$, and
define $d(S)=\alpha$. Repeat the following recursive step as many times as
possible. If $S^{\prime\prime}$ is a shaded face such that $d(S^{\prime\prime
})$ has not yet been defined, and the boundary of $S^{\prime\prime}$ shares a
crossing with the boundary of a shaded face $S^{\prime}$ such that
$d(S^{\prime})$ has been defined, then define $d(S^{\prime\prime})$ to be the
unique element of $A$ that satisfies Definition \ref{dehnc} at this crossing.

After the process of the preceding paragraph is completed, there may still be
shaded faces whose $d$ values have not been defined; these faces do not share
any crossing with shaded faces whose $d$ values have been defined. Go back to
the preceding paragraph, and change the first sentence to read
\textquotedblleft Choose any shaded face $S$ of $D$ whose $d$ value has not
yet been defined, choose any element $\alpha\in A$, and define $d(S)=\alpha
$.\textquotedblright\ Then repeat the entire process of the preceding
paragraph as many times as possible.

We claim that this recursion yields a well defined function $d\in
\mathcal{D}_{A}(D)$. The claim certainly holds in case no crossing appears in
$D$ as $G(D,s)$ is the $0$ matrix, all vectors with entries in $A$ have
$G(D,s)\cdot v=0$, and all functions $F(D)\rightarrow A$ lie in $\mathcal{D}%
_{A}(D)$. We proceed with the assumption that there is at least one crossing
in $D$.

Let $S$ be a shaded face of $D$. If $S$ does not share a crossing of $D$ with
any other shaded face then the value of $d(S)$ is handled by the first
sentence of the second paragraph (as modified in the third paragraph). In this
case it is obvious that $d(S)$ is well defined. Proposition \ref{kertwo} tells us
that $v_{i}=v_{j}$ whenever $F_{i}$ and $F_{j}$ share a crossing of $D$ with
$S$, so $d$ satisfies Definition \ref{dehnc} at all crossings involving $S$.
If all shaded faces of $D$ are of this type, we are done.

The rest of the proof resembles the proof of Theorem \ref{fsur}. Suppose $S$
is a shaded face such that $d(S)$ is defined through the process of the second
paragraph, by applying Definition \ref{dehnc} at a crossing where $S$ and
another shaded face are incident. Let $S_{0},c_{1},S_{1},\dots,c_{k},S_{k}=S$
be the sequence of shaded faces and crossings that was used to determine the
value of $d(S)$, with $S_{0}$ handled by the first sentence of the second
paragraph. This sequence corresponds to a piecewise smooth path $P$ from a point in the
interior of $S_{0}$ near $c_{1}$ to a point in the interior of $S$ near
$c_{k}$, which stays inside shaded faces except when it passes through
crossings. Any other possible recursive definition of $d(S)$ corresponds to a
similar path $P^{\prime}$ from $S_{0}$ to $S$, which can be deformed into $P$
by some sequence of two types of moves: the type illustrated in\ Figure
\ref{colorf5}, and the trivial type in which two consecutive passages in
opposite directions through the same crossing are canceled.

\begin{figure}[tbp]%
\centering
\includegraphics[
trim=3.614539in 7.360545in 2.142893in 1.075247in,
height=1.9545in,
width=2.0816in
]%
{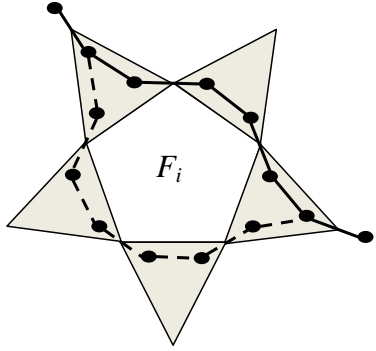}%
\caption{Deforming $P$.}%
\label{colorf5}%
\end{figure}

To verify that $d(S)$ is well defined, we show that the deformation
illustrated in Figure \ref{colorf5} does not change the value of $d(S)$. Let
$S_{p},c_{p+1},S_{p+1},\dots,c_{q},S_{q}$ be the portion of the sequence
$S_{0},c_{1},S_{1},\dots,c_{k},S_{k}$ that is a sequence of shaded neighbors
of $F_{i}$. Then $P^{\prime}$ is obtained by replacing this portion with
$S_{p}=S_{p}^{\prime},c_{p+1}^{\prime},S_{p+1}^{\prime},\dots,c_{q^{\prime}%
}^{\prime},S_{q^{\prime}}^{\prime}=S_{q}$, where the crossings $c_{p+1}%
,\dots,c_{q}$, $c_{q+1}^{\prime},\dots,c_{p+1}^{\prime}$ appear in this order
on a closed curve $\gamma$ contained in the boundary of $F_{i}$. Interchanging
the names of $P$ and $P^{\prime}$ if necessary, we may presume that
$c_{p+1},\dots,c_{q}$, $c_{q+1}^{\prime},\dots,c_{p+1}^{\prime}$ appear in
this order clockwise around $\gamma$. For $1\leq j\leq
q-p$ let $U_{j}$ be the unshaded face that shares the crossing $c_{p+j}$ with
$F_{i}$, and for $1\leq j\leq q^{\prime}-p$ let $U_{j}^{\prime}$ be the
unshaded face that shares the crossing $c_{p+j}^{\prime}$ with $F_{i}$. Then
when the second paragraph states that at each step the unique value of
$d(U_{j})$ or $d(U_{j}^{\prime})$ that satisfies Definition \ref{dehnc} is
used, it means that for each $j\geq1$,
\begin{align*}
d(S_{p+j})  &  =d(S_{p+j-1})+\eta(c_{p+j})\cdot(d(U_{j})-d(F_{i}))\\
&  =d(S_{p+j-1})+\eta(c_{p+j})\cdot(v(U_{j})-v(F_{i}))
\end{align*}%
\begin{align*}
\text{and }d(S_{p+j}^{\prime})  &  =d(S_{p+j-1}^{\prime})-\eta(c_{p+j}%
^{\prime})\cdot(d(U_{j}^{\prime})-d(F_{i}))\\
&  =d(S_{p+j-1}^{\prime})-\eta(c_{p+j}^{\prime})\cdot(v(U_{j}^{\prime
})-v(F_{i})).
\end{align*}
When we follow $P$ we conclude that
\[
d(S_{q})-d(S_{p})=\sum_{j=1}^{q-p}(d(S_{p+j})-d(S_{p+j-1}))=\sum_{j=1}%
^{q-p}\eta(c_{p+j})\cdot(v(U_{j})-v(F_{i}))
\]
and when we follow $P^{\prime}$, we conclude that%
\[
d(S_{q^{\prime}}^{\prime})-d(S_{p})=\sum_{j=1}^{q^{\prime}-p}(d(S_{p+j}%
^{\prime})-d(S_{p+j-1}^{\prime}))=-\sum_{j=1}^{q^{\prime}-p}\eta
(c_{p+j}^{\prime})\cdot(v(U_{j}^{\prime})-v(F_{i})).
\]
The difference between these two values is
\begin{gather*}
-\sum_{j=1}^{q-p}\eta(c_{p+j})\cdot(v(U_{j})-v(F_{i}))-\sum_{j=1}^{q^{\prime
}-p}\eta(c_{p+j}^{\prime})\cdot(v(U_{j}^{\prime})-v(F_{i}))\\
=\left(  \sum_{j=1}^{q-p}\eta(c_{p+j})+\sum_{j=1}^{q^{\prime}-p}\eta
(c_{p+j}^{\prime})\right)  \cdot v(F_{i})\\
-\sum_{j=1}^{q-p}\eta(c_{p+j})\cdot v(U_{j})-\sum_{j=1}^{q^{\prime}-p}%
\eta(c_{p+j}^{\prime})\cdot v(U_{j}^{\prime})=\rho(\gamma)\cdot v\text{,}%
\end{gather*}
where $\rho(\gamma)$ is the vector discussed in Proposition \ref{kernel}. As
$\rho(\gamma)\cdot v=0$, $P$ and $P^{\prime}$ lead to the same value for
$d(S)$.

It remains to verify that $d\in\mathcal{D}_{A}(D)$. Suppose $c$ is a crossing
of $D$, as pictured in Figure \ref{colorf1}. If a single shaded face $S$
appears twice in the figure then Definition \ref{dehnc} requires that the two
unshaded faces $F_{i},F_{j}$ in the figure have the same $d$ value.
Proposition \ref{kertwo} assures us that this is the case. If two different
shaded faces appear, then the well-definedness of $d$ allows us to assume
that the $d$ value of one of the two shaded faces is calculated directly
from the $d$ value of the other one, as in the third sentence of the second
paragraph of the proof. But this calculation is performed precisely to
guarantee that $d$ satisfies Definition \ref{dehnc} at the crossing $c$.
\end{proof}

Combining Propositions \ref{kerone} and \ref{kerthree}, we obtain the following.

\begin{theorem}
Let $D$ be a link diagram with a shading $s$, and $A$ an abelian group. Then $v:\mathcal{D}_{A}(D)\rightarrow\ker_{A}G(D,s)$ is a surjective homomorphism, whose kernel consists of the Dehn colorings that are identically $0$ on unshaded faces of $D$. 
\end{theorem}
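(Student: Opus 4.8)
The plan is simply to assemble the statement from the two preceding propositions together with a routine observation about the map $v$. Proposition~\ref{kerone} already tells us that $v$ is well defined as a map $\mathcal{D}_A(D) \to \ker_A G(D,s)$, i.e., that $v(d) = (d(F_1),\dots,d(F_n))$ lies in $\ker_A G(D,s)$ for every Dehn coloring $d$; and Proposition~\ref{kerthree} tells us that every element of $\ker_A G(D,s)$ is $v(d)$ for some $d \in \mathcal{D}_A(D)$, so $v$ is surjective. The only remaining tasks are to verify that $v$ is a homomorphism of abelian groups and to identify its kernel.

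Both of these are immediate from the definition of $v$. Since addition in $\mathcal{D}_A(D)$ and in $A^n$ is pointwise, and the $i^{\text{th}}$ coordinate of $v(d)$ is the value $d(F_i)$, we have $v(d + d')_i = (d+d')(F_i) = d(F_i) + d'(F_i) = v(d)_i + v(d')_i$ for each $i$, so $v(d+d') = v(d) + v(d')$; thus $v$ is a homomorphism. An element $d$ lies in $\ker v$ precisely when $d(F_i) = 0$ for every unshaded face $F_i$, that is, precisely when $d$ vanishes identically on the unshaded faces of $D$. The set of such $d$ is automatically a subgroup of $\mathcal{D}_A(D)$, being a kernel.

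I do not expect any obstacle at this stage: the substantive content has already been established, in Proposition~\ref{kerone} (via the telescoping sum over the crossings on each boundary component of $F_i$) and in Proposition~\ref{kerthree} (via the path-deformation argument, with the disconnected-boundary and single-shaded-face situations handled by Propositions~\ref{kernel} and~\ref{kertwo}). This theorem is the natural summary of those results — in the same spirit in which Theorem~\ref{fsur} records the surjectivity of $\varphi$ — and its proof should occupy only a sentence or two.
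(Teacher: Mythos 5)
Your proposal is correct and matches the paper exactly: the paper presents this theorem as an immediate consequence of Propositions~\ref{kerone} and~\ref{kerthree}, with the homomorphism property and the description of $\ker v$ noted as easy observations from the pointwise definition of $v$, just as you argue. No gap here.
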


The next definition allows us to give a precise description of $\ker v$.

\begin{definition}
\label{beta}
Let $D$ be a link diagram with a shading $s$. Let $\Gamma_s(D)$ denote the shaded checkerboard graph of $D$, i.e., $\Gamma_s(D)$ has a vertex for each shaded face of $D$ and an edge for each crossing of $D$, with the edge corresponding to a crossing $c$ incident on the vertex (or vertices) corresponding to the shaded face(s) incident at $c$. Then $\beta_s(D)$ denotes the number of connected components of $\Gamma_s(D)$.
\end{definition}

If $d \in \ker v$ then $d$ is identically $0$ on unshaded faces of $D$, so Definition \ref{dehnc} is equivalent to the requirement that $d(S)=d(S')$ whenever $S$ and $S'$ are shaded faces of $D$ incident at the same crossing. It follows that $d$ is constant on each connected component of $\Gamma_s(D)$, and these constant values are arbitrary. We deduce that 
\[
\ker v \cong A^{\beta_s(D)}.
\]

\section{The theorem of Nanyes}

In this section we discuss two versions of the theorem of Nanyes \cite{N}, one involving Dehn colorings and the other involving Fox colorings. The first version asserts that like $\varphi:\mathcal{D}_{A}(D)\rightarrow\mathcal{F}_{A}(D)$, the epimorphism $v:\mathcal{D}_{A}(D)\rightarrow\ker_{A}G(D,s)$ splits. 

\begin{theorem}
\label{main2}Let $D$ be a link diagram, and $A$ an abelian group. Then $\mathcal{D}_{A}(D)$ is the internal direct sum of $\ker v$ and a subgroup isomorphic to $\ker_{A}G(D,s)$.
\end{theorem}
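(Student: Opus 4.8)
The plan is to produce an explicit splitting homomorphism $\psi:\ker_A G(D,s)\to\mathcal{D}_A(D)$ with $v\circ\psi=\mathrm{id}$, in direct analogy with the proof of Theorem~\ref{split}. Given $v\in\ker_A G(D,s)$, Proposition~\ref{kerthree} already produces some $d\in\mathcal{D}_A(D)$ with $v(d)=v$; the only issue is that the construction there involves arbitrary choices (a base shaded face in each component of $\Gamma_s(D)$, and an arbitrary value $\alpha\in A$ assigned to it). To get a homomorphism I need to pin down those choices canonically. The natural normalization is: in every connected component of $\Gamma_s(D)$, fix once and for all a base shaded face $S_0$ (chosen independently of $v$), and always set $d(S_0)=0$. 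Define $\psi(v)$ to be the Dehn coloring obtained from this normalized version of the construction in Proposition~\ref{kerthree}.

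First I would check that $\psi$ is well defined: this is exactly the well-definedness argument already carried out in the proof of Proposition~\ref{kerthree} (the path-independence of $d(S)$ via $\rho(\gamma)\cdot v=0$), together with Proposition~\ref{kertwo} handling shaded faces with no shaded neighbor — and for the normalization to make sense I just need one fixed base point per component of $\Gamma_s(D)$, which is available since $\Gamma_s(D)$ has $\beta_s(D)$ components. Next I would check $\psi$ is a homomorphism: on unshaded faces $\psi(v)(F_i)=v_i$ is linear in $v$ by construction; on a shaded face $S$ the value is computed by a telescoping formula along a fixed path $S_0,c_1,S_1,\dots,c_k,S=S_k$, namely
\[
\psi(v)(S)=\sum_{m=1}^{k}\eta(c_m)\cdot\bigl(v(U_m)-v(F_{i_m})\bigr),
\]
where $U_m,F_{i_m}$ are the two unshaded faces at the crossing $c_m$ (this is the formula already derived, displayed in the proof of Proposition~\ref{kerthree}), and since a fixed path may be used for each $S$ regardless of $v$, this expression is manifestly additive in $v$. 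Then $v\circ\psi=\mathrm{id}$ is immediate from $\psi(v)(F_i)=v_i$. Finally, $\psi$ being a section of the epimorphism $v$ gives the internal direct sum decomposition $\mathcal{D}_A(D)=\ker v\oplus\psi(\ker_A G(D,s))$, with $\psi(\ker_A G(D,s))\cong\ker_A G(D,s)$, by the standard splitting lemma.

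The main obstacle is purely bookkeeping rather than conceptual: making the path-independence from Proposition~\ref{kerthree} interact cleanly with a base-point choice that does not depend on $v$, so that the telescoping formula for $\psi(v)(S)$ really is a fixed $\mathbb{Z}$-linear functional of $v$. Once the construction is phrased as "fix a spanning structure on each component of $\Gamma_s(D)$, evaluate along it," everything else is formal. I would keep the write-up short by simply observing that the construction of Proposition~\ref{kerthree}, performed with all the base values taken to be $0$, yields a map $\psi$, that formula~(\ref{defphi})-style telescoping shows $\psi$ is a homomorphism, and that $v\psi=\mathrm{id}$, exactly mirroring the two-line proof of Theorem~\ref{split}.
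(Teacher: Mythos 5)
Your proposal is correct and matches the paper's proof essentially verbatim: the paper also fixes one shaded face $S_i$ in each component of $\Gamma_s(D)$, runs the construction of Proposition~\ref{kerthree} with $d(S_i)=0$, observes that the uniqueness of the recursively determined values makes the resulting section a homomorphism, and concludes from $vu=\mathrm{id}$.
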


\begin{proof}
Choose shaded faces $S_{1},\dots,S_{\beta_s(D)}$ of $D$, one in each connected
component of the graph $\Gamma_s(D)$. Let $u:\ker_{A}G(D,s) \to \mathcal{D}_{A}(D)$ be the map defined by the construction in the proof of Proposition \ref{kerthree}, always using one of $S_{1},\dots,S_{\beta_s(D)}$ when implementing the first sentence of the second paragraph, and always using $0$ for the value of $d(S_i)$. The uniqueness of the $d$ values calculated in the other steps of the recursion guarantees that $u$ is a homomorphism. As $vu$ is the identity map of $\ker_{A}G(D,s)$, the theorem follows.
\end{proof}

Theorem \ref{main2} implies that 
\[
\mathcal{D}_{A}(D) \cong \ker_{A}G(D,s) \oplus A^{\beta_s(D)}
.
\]
The second version of the theorem of Nanyes \cite{N} is the corresponding description of $\mathcal{F}_{A}(D)$ up to isomorphism. (N.b.\ If $A$ is an abelian group then $A^0$  denotes $\{0\}$, the abelian group with only one element.)

\begin{theorem}
\label{nanyes} Let $D$ be a link diagram, and $A$ an abelian group. Then%
\[
\mathcal{F}_{A}(D)\cong\ker_{A}G(D,s)\oplus A^{\beta_s(D)-1}\text{.}%
\]
\end{theorem}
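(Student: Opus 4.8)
The plan is to combine the two structural results already established. Theorem~\ref{main2} gives the isomorphism $\mathcal{D}_{A}(D)\cong\ker_{A}G(D,s)\oplus A^{\beta_s(D)}$, and Theorem~\ref{split} gives $\mathcal{D}_{A}(D)\cong\mathcal{F}_{A}(D)\oplus\ker\varphi$. Theorem~\ref{fsur} identifies $\ker\varphi$ as $\{d_{\alpha,-\alpha}\mid\alpha\in A\}$, which is visibly isomorphic to $A$ via $\alpha\mapsto d_{\alpha,-\alpha}$ (the map is injective because $d_{\alpha,-\alpha}$ takes the value $\alpha$ on every unshaded face). So $\mathcal{D}_{A}(D)\cong\mathcal{F}_{A}(D)\oplus A$. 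Setting the two expressions for $\mathcal{D}_{A}(D)$ equal yields $\mathcal{F}_{A}(D)\oplus A\cong\ker_{A}G(D,s)\oplus A^{\beta_s(D)}$, i.e.\ $\mathcal{F}_{A}(D)\oplus A\cong\ker_{A}G(D,s)\oplus A^{\beta_s(D)-1}\oplus A$.

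First I would write down these three isomorphisms cleanly. The one delicate point is that, to conclude $\mathcal{F}_{A}(D)\cong\ker_{A}G(D,s)\oplus A^{\beta_s(D)-1}$, I cannot simply "cancel" the summand $A$ from both sides of the displayed isomorphism — cancellation of a common direct summand is not valid for arbitrary abelian groups. So the main obstacle is to avoid invoking cancellation. The right way around this is to produce an explicit splitting at the level of the maps, rather than reasoning abstractly with isomorphism classes.

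Concretely, I would argue as follows. Inside $\mathcal{D}_{A}(D)$ we have, by the proof of Theorem~\ref{main2}, a section $u:\ker_{A}G(D,s)\to\mathcal{D}_{A}(D)$ of $v$ whose image together with $\ker v\cong A^{\beta_s(D)}$ gives an internal direct sum decomposition; and by the proof of Theorem~\ref{split} we have a section $\delta:\mathcal{F}_{A}(D)\to\mathcal{D}_{A}(D)$ of $\varphi$. Now restrict $\varphi$ to the subgroup $\operatorname{im}(u)\oplus\ker v'$, where $\ker v'$ denotes a chosen direct-sum complement of one coordinate $A$-factor inside $\ker v\cong A^{\beta_s(D)}$ — that is, single out one of the chosen faces $S_1,\dots,S_{\beta_s(D)}$, say the one used to carry the $\ker\varphi$-factor, and let $\ker v'$ be spanned by the other $\beta_s(D)-1$ coordinate factors. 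The point is that $\ker\varphi\cap\ker v$ records exactly the Dehn colorings that are $0$ on all unshaded faces \emph{and} constant with opposite sign across every crossing — forcing the single free shaded value to equal its own negative pattern, and since $d_{\alpha,-\alpha}$ is constant $\alpha$ on unshaded faces, $\ker\varphi\cap\ker v=0$. Hence $\ker\varphi$ maps isomorphically onto one coordinate factor of $\ker v$ under the projection, and $\varphi$ restricted to $\operatorname{im}(u)\oplus\ker v'$ is injective with image all of $\mathcal{F}_{A}(D)$, giving the desired isomorphism $\mathcal{F}_{A}(D)\cong\ker_{A}G(D,s)\oplus A^{\beta_s(D)-1}$.

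Actually, the cleanest execution is to sidestep even that bookkeeping: define directly a map $\mathcal{F}_{A}(D)\to\ker_{A}G(D,s)\oplus A^{\beta_s(D)-1}$ by sending $f$ to $v(\delta(f))$ together with the $d$-values of $\delta(f)$ on $\beta_s(D)-1$ of the chosen shaded faces $S_2,\dots,S_{\beta_s(D)}$ (omitting $S_1$), where $\delta$ uses $\alpha_0=0$ at the face $F_0$ chosen to lie in the component of $S_1$. Then I would check injectivity (if $v(\delta(f))=0$ and those $\beta_s(D)-1$ shaded values vanish, trace through formula~(\ref{defphi}): $\delta(f)$ is $0$ on all unshaded faces, hence $f=\varphi(\delta(f))=0$, using that $\delta$ is a section of $\varphi$) and surjectivity (given a target element, build a Dehn coloring realizing the prescribed unshaded values via $u$ and adjust the shaded components in the one free component to match, then apply $\varphi$). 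The main obstacle throughout is purely the non-cancellability of $A$; once the argument is phrased via explicit sections of $v$ and $\varphi$ — both of which are handed to us by the earlier proofs — the verification is a routine diagram chase.
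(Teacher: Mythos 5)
Your final construction is correct and is essentially the paper's own argument: the paper sets $\mathcal{D}_{1}=\{d\in\mathcal{D}_{A}(D)\mid d(S_{1})=0\}$, shows $(\varphi\mid\mathcal{D}_{1})$ is an isomorphism onto $\mathcal{F}_{A}(D)$ via the section based at $S_{1}$ with $\alpha_{0}=0$, and decomposes $\mathcal{D}_{1}$ as the internal direct sum of a copy of $\ker_{A}G(D,s)$ and $\ker(v\mid\mathcal{D}_{1})\cong A^{\beta_{s}(D)-1}$ --- exactly the data recorded by your map $f\mapsto\bigl(v(\delta(f)),\delta(f)(S_{2}),\dots,\delta(f)(S_{\beta_{s}(D)})\bigr)$ --- and you correctly identify that the whole point is to avoid cancelling a direct summand $A$. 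One slip in your intermediate paragraph: the projection of $\ker\varphi$ into $\ker v\cong A^{\beta_{s}(D)}$ is the diagonal $\{(-\alpha,\dots,-\alpha)\}$ rather than a single coordinate factor, since $d_{\alpha,-\alpha}$ takes the value $-\alpha$ on every shaded face; the diagonal is still a complement of the sub-sum on coordinates $2,\dots,\beta_{s}(D)$, so the conclusion survives, and your final paragraph supersedes that bookkeeping anyway.
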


\begin{proof}
Choose a shaded face $S_{1}$ of $D$, and let 
\[
\mathcal{D}_{1}=\{d \in \mathcal{D}_{A}(D) \mid d(S_{1})=0 \} \text{.}
\]

If we apply the construction in the proof of Theorem \ref{fsur} with $S_{1}$ always playing the role of $F_{0}$ and $0$ always playing the role of $\alpha_0$, we conclude that the restricted mapping  $(\varphi \mid \mathcal{D}_{1}):\mathcal{D}_{1} \to \mathcal{F}_{A}(D)$ is surjective. As $\ker \varphi = \{d_{\alpha,-\alpha} \mid \alpha \in A \}$, $\ker (\varphi \mid \mathcal{D}_{1})=\{0\}$. Hence $\mathcal{F}_{A}(D) \cong \mathcal{D}_{1}$.

If we apply the construction in the proof of Proposition \ref{kerthree} with $S_1$ always used in the first implementation of the first sentence of the second paragraph and $0$ always used as the arbitrarily chosen value of $d(S_1)$, then we conclude that the restricted mapping $(v \mid \mathcal{D}_{1}):\mathcal{D}_{1} \to \ker_{A}G(D,s)$ is surjective. The description of $\ker v$ at the end of the preceding section applies to $\ker (v \mid \mathcal{D}_{1})$, with the exception that for $d \in \ker (v \mid \mathcal{D}_{1})$ the value of $d$ on the connected component of $\Gamma_{s}(D)$ containing $S_1$ is not arbitrary; it is $0$. We deduce that $\ker (v \mid \mathcal{D}_{1}) \cong A^{\beta_s(D)-1}$. 

Applying the proof of Theorem \ref{main2} to $\mathcal{D}_{1}$, we conclude that $\mathcal{D}_{1}$ is the internal direct sum of $\ker (v \mid \mathcal{D}_{1})$ and a subgroup isomorphic to $\ker_{A}G(D,s)$. 
\end{proof}

Notice that checkerboard colorings are not mentioned in Definitions~\ref{foxc} and~\ref{dehnc}, but Theorems \ref{main2} and \ref{nanyes} tell us that $\mathcal{D}_{A}(D)$ and $\mathcal{F}_{A}(D)$ are determined up to isomorphism by the checkerboard graphs of $D$: the unshaded checkerboard graph provides $G(D,s)$, and the shaded checkerboard graph provides $\beta_s(D)$. Although the two checkerboard graphs play different roles, the theorems apply equally well if the shading $s$ is reversed.

We should explain that we refer to Theorems \ref{main2} and \ref{nanyes} as \textquotedblleft versions\textquotedblright\ of the theorem of Nanyes \cite{N} because there are several differences between our setup and Nanyes's: he required the image of a Fox coloring to generate $A$, he required a Fox coloring to be $0$ somewhere, and he removed a row and column from the Goeritz matrix. The first difference has the effect of shifting attention from the kernel to the cokernel of a homomorphism represented by $G(D,s)$. The second difference has the effect of removing a direct summand isomorphic to $A$ from $\mathcal{F}_{A}(D)$, and the third difference has the effect of removing another such direct summand from $\ker_{A}G(D,s)$. We leave further articulation of the details of the relationship between our discussion and that of \cite{N} to the reader.

We should also point out that Nanyes's descriptions of $\beta_s(D)$ and $G(D,s)$ in
\cite{N} are inaccurate. He described $G(D,s)$ as a matrix obtained using all
the faces of $D$, not just the unshaded faces; and he described $\beta_s(D)$ as
the number of connected components of the graph of unshaded faces, not
the graph of shaded faces. It is easy to see that either of these mistakes can lead to an erroneous description of $\mathcal{F}_{A}(D)$. For instance, let $D$ be a crossing-free diagram of a $\mu$-component unlink. Then $1 \leq \beta_s(D) \leq \mu$ and $D$ has $n=\mu +1 -\beta_s(D)$ unshaded faces. The corresponding Goeritz matrix has all entries $0$, so if we mistakenly replace $G(D,s)$ with a matrix that has a row and column for every face, we will conclude that $\ker_{A}G(D,s)\oplus A^{\beta_s(D)-1}$ is isomorphic to $A^{n+2\beta_s(D)-1}=A^{\mu + \beta_s(D)}$. If we use the correct definition of $G(D,s)$ then we obtain the $0$ matrix of order $n$, so $\ker_{A}G(D,s)\cong A^n$; if we then mistakenly calculate $\beta_s(D)$ using unshaded faces we will conclude that $\ker_{A}G(D,s)\oplus A^{\beta_s(D)-1}$ is isomorphic to $A^{2n-1}$. The correct calculation yields $\ker_{A}G(D,s)\oplus A^{\beta_s(D)-1} \cong A^{n+\beta_s(D)-1}=A^{\mu}$. This is isomorphic to $\mathcal{F}_{A}(D)$ because a Fox coloring of $D$ is simply a function that is constant on each component of $L(D)$.

\section{Two examples}

Let $T$ be the $(2,8)$ torus link diagram pictured on the left in Figure \ref{colorf7}, and $W$ the Whitehead link diagram pictured on the right. It is easy to see that $L(T)$ and $L(W)$ are inequivalent links: the linking number of the two components of $L(T)$ is $\pm4$, and the linking number of the two components of $L(W)$ is $0$. 

Nevertheless, Theorems \ref{main2} and \ref{nanyes} imply that every abelian group $A$ has $\mathcal{D}_{A}(T) \cong \mathcal{D}_{A}(W)$ and $\mathcal{F}_{A}(T) \cong \mathcal{F}_{A}(W)$. To see why, notice that according to Definition \ref{goeritz} the Goeritz matrices of $T$ and $W$ associated with the shadings of~Figure \ref{colorf7} are
\[
G(T,s)=
\begin{pmatrix}
-8 & 8\\
8 & -8%
\end{pmatrix}
\text{\quad and \quad}
G(W,s)=\begin{pmatrix}
-3 & 1 & 2\\
1 & -3 & 2\\
2 & 2 & -4
\end{pmatrix}
\text{.}
\]
Consequently, if $A$ is an abelian group then $\ker_{A}G(T,s)$ is the set of ordered pairs $(x,y) \in A^2$ such that $8y-8x=0$, and $\ker_{A}G(W,s)$ is the set of ordered triples $(a,b,c) \in A^3$ such that $-3a+b+2c=0$ and $a-3b+2c=0$. 

\begin{figure}[pbt]%
\centering
\includegraphics[
trim=1.605046in 7.763350in 1.339945in 1.342683in,
height=1.452in,
width=4.1926in
]%
{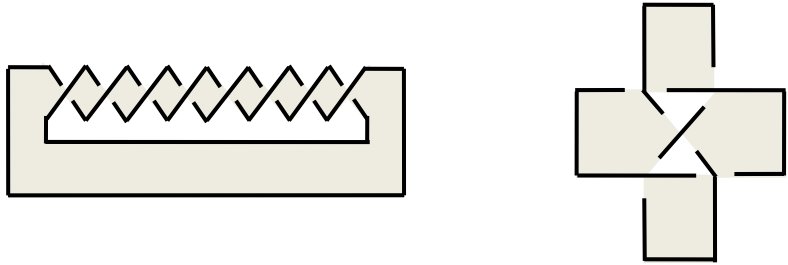}%
\caption{Diagrams of a $(2,8)$ torus link and a Whitehead link.}
\label{colorf7}%
\end{figure}

We claim that for every abelian group $A$, the formula $\pi(a,b,c) = (a,c)$ defines an isomorphism $\pi:\ker_{A}G(W,s) \to \ker_{A}G(T,s)$. The claim is verified in four steps. First, notice that if $(a,b,c) \in \ker_{A}G(W,s)$ then 
\[
8c-8a=3 \cdot (-3a+b+2c)+a-3b+2c=3 \cdot 0 + 0 = 0 \text{,}
\]
so $\pi(a,b,c) =(a,c)$ is an element of $\ker_{A}G(T,s)$. Second, notice that $\pi$ is a homomorphism because it is a restriction of the projection homomorphism $A^3 \to A^2$ defined by $(a,b,c) \mapsto (a,c)$. Third, notice that if $(x,y) \in \ker_{A}G(T,s)$ then $(x,3x-2y,y) \in \ker_{A}G(W,s)$, because
\[
-3x+(3x-2y)+2y=0 \quad \text{  and  } \quad x-3 \cdot (3x-2y)+2y=-8x+8y= 0 \text{.}
\]
As $\pi(x,3x-2y,y)=(x,y)$, we deduce that $\pi$ is surjective. Fourth, notice that if $(a,b,c),(a',b',c') \in \ker_{A}G(W,s)$ and $\pi(a,b,c)=\pi(a',b',c')$ then $a=a'$ and $c=c'$, so $b=3a-2c=3a'-2c'=b'$. We deduce that $\pi$ is injective.

If $\overline{s}$ denotes the shadings of $T$ and $W$ opposite to those indicated in Figure \ref{colorf7}, then $\beta_s(T)=\beta_s(W)=\beta_{\overline{s}}(T)= \beta_{\overline{s}}(W)=1$. According to Theorem \ref{nanyes}, it follows that $\ker_{A}G(T,s) \cong \ker_{A}G(T,\overline{s})$ and $\ker_{A}G(W,s) \cong \ker_{A}G(W,\overline{s})$. We leave it as an exercise for the reader to verify these isomorphisms directly.

\section{A special case}

Several authors have observed that when $A=\mathbb{Z}/2\mathbb{Z}$, there is
an especially simple relationship between $\ker_{A}G(D,s)$ and
the link $L(D)$~\cite{CG, H, LSW, N, SW, T}. In fact this simple relationship holds more generally when
the exponent of $A$ is $2,$ i.e., $2\cdot\alpha=0$ $\forall\alpha\in A$. We
close with a summary of this simple relationship.

Let $D$ be a link diagram, with $L(D)=K_{1}\cup\dots\cup K_{\mu}$. For each
$F\in F(D)$ there is a piecewise smooth path $P_{F}$ from a point in the
interior of $F$ to a point in the interior of the unbounded face of $D$, which does not come near any crossing of $D$ and has only a finite number of intersections with the arcs of $D$, all of which are transverse intersections. In general there are many such paths, with different patterns of intersections with $D$; but every such path will
have the same number of intersections (mod $2$) with the image of each $K_{i}$.

\begin{definition}
If $F\in F(D)$ then for $1\leq i\leq\mu$ the \emph{index} of $F$ with respect to $K_{i}$ is the parity (mod $2$) of the number of intersection points of a path $P_{F}$ with the image of $K_{i}$ in the plane. We denote the index $i_{D}(F,K_i)$.
\end{definition}

\begin{proposition}
Suppose $A$ is of exponent $2$. Then $f:A(D)\rightarrow A$ is a Fox coloring
of $D$ if and only if there is an element $(\alpha_{1},\dots
,\alpha_{\mu}) \in A^{\mu}$ such that $f(a)=\alpha_{i}$ whenever $a$ is an arc of $D$
that belongs to the image of $K_{i}$.
\end{proposition}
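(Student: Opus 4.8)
The plan is to prove both implications directly from Definition~\ref{foxc}, using the exponent-$2$ hypothesis to linearize the coloring condition. First I would dispose of the easy direction: suppose $f(a)=\alpha_i$ whenever $a$ lies in the image of $K_i$. At any crossing, the two underpassing arcs $a_1,a_2$ and the overpassing arc $a_3$ all belong to the image of a single component $K_i$ (arcs are delimited by undercrossings, so passing through a crossing on the overstrand does not change which component we are on, and the two understrands are the two ends of a strand that continues through on the overstrand side). Hence $f(a_1)=f(a_2)=f(a_3)=\alpha_i$, and since $A$ has exponent $2$ we get $f(a_1)+f(a_2)=2\alpha_i=0=2\cdot f(a_3)$. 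Actually I should be slightly careful: the two underpassing arcs at a crossing may belong to different components only if\,\ldots no, they are the two arcs abutting the same understrand, which is a sub-path of a single $K_i$; so they are on the same component. Thus $f\in\mathcal{F}_A(D)$.

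For the converse, suppose $f\in\mathcal{F}_A(D)$. Because $A$ has exponent $2$, the Fox condition $f(a_1)+f(a_2)=2\cdot f(a_3)$ becomes simply $f(a_1)+f(a_2)=0$, i.e.\ $f(a_1)=f(a_2)$ (using $-\alpha=\alpha$). In other words, $f$ takes equal values on the two underpassing arcs at every crossing, while the value on the overpassing arc is unconstrained by that crossing. The key step is then to observe that the equivalence relation on $A(D)$ generated by ``$a_1\sim a_2$ when $a_1,a_2$ are the two underpassing arcs at a common crossing'' has as its classes exactly the sets of arcs belonging to a single component $K_i$: walking along $K_i$ in the plane, the arc changes precisely when we pass under a crossing, and at each such crossing the incoming and outgoing arcs are the two underpassing arcs there, hence related. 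Conversely arcs related by the generating relation clearly lie on the same component, and the relation on $A(D)$ of ``belonging to the same $K_i$'' is already an equivalence relation containing the generators. Therefore $f$ is constant on the set of arcs of each $K_i$; call that common value $\alpha_i$, and set $(\alpha_1,\dots,\alpha_\mu)$ accordingly. This gives the desired representation.

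The main obstacle — really the only subtlety — is the bookkeeping in the converse: making precise that the underpass relation generates the partition of $A(D)$ by components. This is where one must use the definition of an arc of a link diagram (a maximal connected piece of $D$, terminating at undercrossings) together with the fact that each curve $\gamma_i$ is a simple closed curve, so following it returns to the start after traversing a cyclic sequence of arcs linked consecutively by the generating relation. I would phrase this as: fix $i$ and an arc $a$ on $K_i$; traversing $\gamma_i$ starting in $a$, the successive arcs encountered are $a=a^{(0)}, a^{(1)}, \dots, a^{(r)}=a$, and each consecutive pair $a^{(t)},a^{(t+1)}$ meets at an undercrossing as the two underpassing arcs there, so $f(a^{(t)})=f(a^{(t+1)})$ for all $t$, whence $f$ is constant on all arcs of $K_i$. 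Everything else is immediate. One should also note the degenerate case $\mu$ could include a component $K_i$ that is an embedded circle with no undercrossings on it (a free unknotted split component, or an overstrand-only component): then $K_i$ consists of a single arc, on which $f$ is automatically constant, and the statement holds trivially for that component.
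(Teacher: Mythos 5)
Your proposal is correct and takes essentially the same approach as the paper, which simply observes that exponent $2$ reduces the Fox condition to $f(a_1)=f(a_2)$ at each crossing and that this relation, applied at every undercrossing, forces $f$ to be constant on the arcs of each component; your version just spells out the bookkeeping. One small factual slip: your claim in the easy direction that the overpassing arc $a_3$ belongs to the \emph{same} component as $a_1,a_2$ is false in general (e.g.\ the Hopf link), but it is harmless here because your displayed verification only uses $f(a_1)=f(a_2)=\alpha_i$ and $2\cdot f(a_3)=0$, both of which hold regardless of which component $a_3$ lies on.
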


\begin{proof}
As $A$ is of exponent $2$, Definition \ref{foxc} simply requires
$f(a_{1})=f(a_{2})$ in Figure \ref{colorf1}. Applying this equation at every crossing, we conclude that $\mathcal{F}_{A}(D)$ is the set of maps $A(D) \to A$ that are constant on the image of every $K_{i}$.
\end{proof}

That is, $\mathcal{F}_{A}(D)$ may be identified naturally with $A^{\mu}$. It
follows from Theorem \ref{split} that $\mathcal{D}_{A}(D)$ may be identified
naturally with $A^{\mu+1}$. We spell out the details:

\begin{proposition}
\label{twodehn}
Suppose $A$ is of exponent $2$. Then $d:F(D)\rightarrow A$ is a Dehn coloring
of $D$ if and only if there is an element $(\alpha_{0},\dots,\alpha_{\mu
})\in A^{\mu +1}$ such that every $F\in F(D)$ has%
\[
d(F)=\alpha_{0}+ \sum_{i=1}^{\mu} \alpha_{i} \cdot i_{D}(F,K_i)
.
\]
\end{proposition}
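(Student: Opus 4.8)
The plan is to prove both implications by relating a Dehn coloring $d$ to the Fox coloring $\varphi(d)\in\mathcal{F}_A(D)$ and tracking how the indices $i_D(-,K_i)$ change as a path crosses the arcs of $D$. The basic observation, used throughout, is that if an arc $a$ belongs to $K_m$ and two faces $F,F'$ are adjacent across a segment of positive length of $a$, then $i_D(F,K_j)=i_D(F',K_j)$ for every $j\neq m$ while $i_D(F,K_m)$ and $i_D(F',K_m)$ differ mod $2$; this is immediate from the definition of the index, since a short path from $F'$ across that segment into $F$ meets $\gamma_m$ once and misses every other $\gamma_j$. For the ``if'' direction I would fix $(\alpha_0,\dots,\alpha_\mu)$, define $d$ by the displayed formula, and check Definition~\ref{dehnc} directly: if $F,F'$ share a segment of positive length on an arc $a$ belonging to $K_m$, then by the basic observation together with the hypothesis $2\cdot\alpha=0$ for all $\alpha\in A$, every term of $d(F)+d(F')$ cancels except the one coming from $K_m$, which contributes $\alpha_m$; hence $d(F)+d(F')=\alpha_m$ depends only on $a$. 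Since this argument is uniform over adjacent pairs of faces, the crossing-free case needs no separate treatment.

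For the ``only if'' direction, suppose $d\in\mathcal{D}_A(D)$. By the preceding proposition, $\varphi(d)$ is constant on the image of each $K_i$; call that common value $\alpha_i$, and set $\alpha_0=d(U)$ where $U$ is the unbounded face. Given any face $F$, choose a generic piecewise smooth path from $U$ to $F$ that meets the arcs of $D$ transversally in finitely many points and avoids all crossings, passing through faces $U=F_0',F_1',\dots,F_k'=F$ and arcs $a_1',\dots,a_k'$ in that order. Consecutive faces $F_{j-1}'$ and $F_j'$ share a segment of positive length on $a_j'$, so Definition~\ref{fdef} gives $d(F_{j-1}')+d(F_j')=\varphi(d)(a_j')$, which rearranges (using $-\alpha=\alpha$ in $A$) to $d(F_j')=d(F_{j-1}')+\varphi(d)(a_j')$; telescoping along the path yields $d(F)=\alpha_0+\sum_{j=1}^{k}\varphi(d)(a_j')$. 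Grouping this sum according to the component to which each $a_j'$ belongs and using $2\cdot\alpha=0$ to reduce each count mod $2$, the coefficient of $\alpha_i$ becomes the parity of the number of intersections of the path with $\gamma_i$, namely $i_D(F,K_i)$, which is precisely the displayed formula.

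The one point that needs care in this last step is the bookkeeping: the number of times the chosen path crosses an arc belonging to $K_i$ equals the number of intersection points of the path with the image of $K_i$, so its parity is $i_D(F,K_i)$ independently of the path --- but this is exactly the well-definedness of the index recalled when the index was defined, so it is not a real obstacle. The same conclusion can be obtained more formally from Theorem~\ref{split}: since $A$ has exponent $2$, $d_{\alpha,-\alpha}=d_{\alpha,\alpha}$ is the constant coloring with value $\alpha$, so $\ker\varphi$ consists of the constant colorings; writing $d=c+\delta(\varphi(d))$ with $c$ constant and $\delta$ the splitting from the proof of Theorem~\ref{fsur} taken relative to $F_0=U$, one finds $c\equiv d(U)$ and evaluates formula~(\ref{defphi}) to reproduce the computation above. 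Either route also shows that $(\alpha_0,\dots,\alpha_\mu)$ is uniquely determined by $d$, which is the precise sense in which $\mathcal{D}_A(D)$ is identified with $A^{\mu+1}$.
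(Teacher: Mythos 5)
Your proof is correct and follows essentially the same route as the paper, which simply observes that the stated formula is what formula~(\ref{defphi}) from the proof of Theorem~\ref{fsur} becomes when the exponent of $A$ is $2$ (signs vanish, $\varphi(d)$ is constant on each component, and the count of crossings of each $\gamma_i$ reduces mod~$2$ to the index). You have merely filled in the details of that reduction, including the direct check of Definition~\ref{dehnc} for the ``if'' direction, so no further comment is needed.
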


\begin{proof}
The formula in the statement is the appropriate version of formula
(\ref{defphi}) from the proof of Theorem \ref{fsur}.
\end{proof}

\begin{corollary}
\label{gker}
Suppose $A$ is of exponent $2$, and $F_{1},\dots,F_{n}$ are the unshaded faces
of a shading $s$ of $D$. Then an element $v=(v_1,\dots,v_n)\in A^n$ is contained in $\ker_{A}G(D,s)$ if and only if there is an element $(\alpha_{0},\dots,\alpha_{\mu
})\in A^{\mu +1}$ such that %
\[
v_{j}=\alpha_{0}+\sum_{i=1}^{\mu} \alpha_{i} \cdot i_{D}(F_j,K_i)
\]
for every $j\in\{1,\dots,n\}$.
\end{corollary}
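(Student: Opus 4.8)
The plan is to deduce Corollary~\ref{gker} by combining the surjective homomorphism $v:\mathcal{D}_{A}(D)\to\ker_{A}G(D,s)$ from the theorem following Proposition~\ref{kerthree} with the explicit description of $\mathcal{D}_{A}(D)$ in Proposition~\ref{twodehn}. The key observation is that $\ker_{A}G(D,s)$ is exactly the image of $v$, so $v=(v_{1},\dots,v_{n})\in\ker_{A}G(D,s)$ if and only if $v_{j}=d(F_{j})$ for some $d\in\mathcal{D}_{A}(D)$ and every unshaded face $F_{j}$.

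First I would invoke the theorem that asserts $v$ is surjective: every element of $\ker_{A}G(D,s)$ is of the form $(d(F_{1}),\dots,d(F_{n}))$ for some Dehn coloring $d$, and conversely Proposition~\ref{kerone} guarantees that every such tuple lies in $\ker_{A}G(D,s)$. So membership in $\ker_{A}G(D,s)$ is equivalent to the existence of a $d\in\mathcal{D}_{A}(D)$ with $d(F_{j})=v_{j}$ for all $j$. Second, since $A$ has exponent $2$, Proposition~\ref{twodehn} tells us that $d\in\mathcal{D}_{A}(D)$ if and only if there is $(\alpha_{0},\dots,\alpha_{\mu})\in A^{\mu+1}$ with $d(F)=\alpha_{0}+\sum_{i=1}^{\mu}\alpha_{i}\cdot i_{D}(F,K_{i})$ for every face $F$. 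Restricting this formula to the unshaded faces $F_{1},\dots,F_{n}$ gives exactly the stated description of $v_{j}$. Combining the two equivalences yields the corollary: $v\in\ker_{A}G(D,s)$ iff $v_{j}=\alpha_{0}+\sum_{i=1}^{\mu}\alpha_{i}\cdot i_{D}(F_{j},K_{i})$ for some $(\alpha_{0},\dots,\alpha_{\mu})\in A^{\mu+1}$.

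There is essentially no hard part here; the corollary is a direct splicing together of two previously established facts. The only point that deserves a sentence of care is the ``only if'' direction — namely that a tuple $(v_{1},\dots,v_{n})$ in the kernel can be \emph{extended} to a full Dehn coloring of all faces of $D$ (not merely the unshaded ones) — but this is precisely the content of Proposition~\ref{kerthree} and the surjectivity theorem, so it may simply be cited. The ``if'' direction is Proposition~\ref{kerone} applied to the Dehn coloring produced by Proposition~\ref{twodehn} from the given $(\alpha_{0},\dots,\alpha_{\mu})$. I would write the proof in two short sentences, one for each direction, referencing the surjectivity theorem and Proposition~\ref{twodehn}.
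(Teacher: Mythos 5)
Your proposal is correct and follows essentially the same route as the paper: the paper's proof consists of the single observation that, by Proposition~\ref{kerthree} (together with Proposition~\ref{kerone}), the elements of $\ker_{A}G(D,s)$ are exactly the restrictions of Dehn colorings to the unshaded faces, after which Proposition~\ref{twodehn} supplies the formula. You have simply made both directions of that splicing explicit.
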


\begin{proof}
According to Proposition \ref{kerthree}, the elements of $\ker_{A}G(D,s)$ are
the elements of $A^{n}$ obtained by evaluating Dehn colorings on the unshaded
faces of $D$.
\end{proof}

The same formula appears in Proposition \ref{twodehn} and Corollary \ref{gker}, but the two maps defined by the formula are quite different. In Proposition \ref{twodehn}, the formula defines an isomorphism $A^{\mu +1} \to \mathcal{D}_{A}(D)$. In Corollary \ref{gker}, instead, the formula defines a split epimorphism $A^{\mu +1} \to \ker_{A}G(D,s)$, whose kernel is isomorphic to $A^{\beta_s(D)}$.

\bibliographystyle{plain}
\bibliography{colors}

\end{document}